\title[Foundation axiom and self-embeddings of the universe]{The foundation axiom and elementary self-embeddings of the universe}
\author[Daghighi]{Ali Sadegh Daghighi}
 \address[A. S. Daghighi]
        {School of Mathematics, Amirkabir University of Technology, Hafez avenue 15194, Tehran, Iran}
 \email{alidaghighi@aut.ac.ir}
 \urladdr{}
\author[Golshani]{Mohammad Golshani}
 \address[M. Golshani]
         {School of Mathematics, Institute for Research in Fundamental Sciences (IPM),
          P.O. Box: 19395-5746, Tehran, Iran}
 \email{golshani.m@gmail.com}
 \urladdr{http://math.ipm.ac.ir/golshani/}
\author[Hamkins]{Joel David Hamkins}
 \address[J. D. Hamkins]
        {Mathematics, Philosophy, Computer Science,
          The Graduate Center of The City University of New York,
          365 Fifth Avenue, New York, NY 10016
          \&
          Mathematics,
          College of Staten Island of CUNY,
          Staten Island, NY 10314}
\email{jhamkins@gc.cuny.edu}
\urladdr{http://jdh.hamkins.org}
\author[\Jerabek]{Emil \Jerabek}
 \address[E. \Jerabek]
        {Institute of Mathematics of the Academy of Sciences\\
         \v Zitn\'a~25\\
         115\:67  Praha~1\\
         Czech Republic}
\email{jerabek@math.cas.cz}
\urladdr{http://math.cas.cz/~jerabek}
\thanks{The second author's research has been supported by a grant from IPM (No. 91030417). The third author's research has been supported in part by
Simons Foundation grant 209252, by PSC-CUNY grant 66563-00 44 and by grant 80209-06 20 from the CUNY Collaborative Incentive Award program. The fourth author has been supported by grant IAA100190902 of GA AV \v CR, Center of Excellence CE-ITI under the grant P202/12/G061 of GA \v CR, and RVO: 67985840. This inquiry grew out of a question posed by the first author on MathOverflow \cite{MO136057Daghighi:IsThereAnyLargeCardinalBeyondKunenInconsistency?} and the subsequent exchange posted by the third and fourth authors there.}
\newtheorem{theorem}{Theorem}
\newtheorem{corollary}[theorem]{Corollary}
\newtheorem{sublemma}{Lemma}[theorem]
\newtheorem{question}[theorem]{Question}
\newtheorem{remark}[theorem]{Remark}
\newcommand{\QED}{\end{proof}}
\def\proclaim[#1]{{\bf #1}}
\def\BF#1.{{\bf #1.}}
\newcommand\Hrbacek{Hrb\'a\v cek}
\newcommand{\Godel}{G\"odel}
\newcommand{\Jerabek}{Je\v r\'abek}
\newcommand{\calM}{{\mathcal M}}
\newcommand{\id}{\mathop{\hbox{\small id}}}
\newcommand{\of}{\subseteq}
\newcommand{\ofneq}{\subsetneq}
\newcommand{\fo}{\supseteq}
\newcommand{\set}[1]{\{\,{#1}\,\}}
\newcommand{\singleton}[1]{\left\{{#1}\right\}}
\newcommand{\dom}{\mathop{\rm dom}}
\newcommand{\ran}{\mathop{\rm ran}}
\newcommand{\Aut}{\mathop{\rm Aut}}
\newcommand{\restrict}{\upharpoonright} % uses amssymb
\newcommand{\satisfies}{\models}
\newcommand{\Union}{\bigcup}
\newcommand{\intersect}{\cap}
\newcommand{\smalllt}{\mathrel{\mathchoice{\raise2pt\hbox{$\scriptstyle<$}}{\raise1pt\hbox{$\scriptstyle<$}}{\raise0pt\hbox{$\scriptscriptstyle<$}}{\scriptscriptstyle<}}}
\newcommand{\smallleq}{\mathrel{\mathchoice{\raise2pt\hbox{$\scriptstyle\leq$}}{\raise1pt\hbox{$\scriptstyle\leq$}}{\raise1pt\hbox{$\scriptscriptstyle\leq$}}{\scriptscriptstyle\leq}}}
\newcommand{\Card}[1]{{\left|#1\right|}}
\newcommand{\boolval}[1]{\mathopen{\lbrack\!\lbrack}\,#1\,\mathclose{\rbrack\!\rbrack}}
\def\[#1]{\boolval{#1}}
\newbox\gnBoxA
\newdimen\gnCornerHgt
\newdimen\gnArgHgt
\def\gcode #1{%
\setbox\gnBoxA=\hbox{$#1$}%
\gnArgHgt=\ht\gnBoxA%
\ifnum     \gnArgHgt<\gnCornerHgt \gnArgHgt=0pt%
\else \advance \gnArgHgt by -\gnCornerHgt%
\fi \raise\gnArgHgt\hbox{\tiny$\ulcorner$} \box\gnBoxA %
\raise\gnArgHgt\hbox{\tiny$\urcorner$}}
\newcommand{\UnderTilde}[1]{{\setbox1=\hbox{$#1$}\baselineskip=0pt\vtop{\hbox{$#1$}\hbox to\wd1{\hfil$\sim$\hfil}}}{}}
\newcommand{\Undertilde}[1]{{\setbox1=\hbox{$#1$}\baselineskip=0pt\vtop{\hbox{$#1$}\hbox to\wd1{\hfil$\scriptstyle\sim$\hfil}}}{}}
\newcommand{\undertilde}[1]{{\setbox1=\hbox{$#1$}\baselineskip=0pt\vtop{\hbox{$#1$}\hbox to\wd1{\hfil$\scriptscriptstyle\sim$\hfil}}}{}}
\newcommand{\UnderdTilde}[1]{{\setbox1=\hbox{$#1$}\baselineskip=0pt\vtop{\hbox{$#1$}\hbox to\wd1{\hfil$\approx$\hfil}}}{}}
\newcommand{\Underdtilde}[1]{{\setbox1=\hbox{$#1$}\baselineskip=0pt\vtop{\hbox{$#1$}\hbox to\wd1{\hfil\scriptsize$\approx$\hfil}}}{}}
\renewcommand{\implies}{\mathrel{\rightarrow}}
\renewcommand{\iff}{\mathrel{\leftrightarrow}}
\newcommand{\minus}{\setminus}
\def\<#1>{\langle#1\rangle}
\newcommand{\TC}{\mathop{{\rm TC}}}
\newcommand{\Ord}{\mathop{{\rm Ord}}}
\newcommand{\WF}{\mathord{{\rm WF}}}
\newcommand{\ZFC}{{\rm ZFC}}
\newcommand{\ZF}{{\rm ZF}}
\newcommand{\ZFA}{{\rm ZFA}}
\newcommand{\KM}{{\rm KM}}
\newcommand{\GB}{{\rm GB}}
\newcommand{\GBC}{{\rm GBC}}
\newcommand{\AC}{{\rm AC}}
\newcommand{\AFA}{{\rm AFA}}
\newcommand{\BAFA}{{\rm BAFA}}
\newcommand{\FAFA}{{\rm FAFA}}
\newcommand{\SAFA}{{\rm SAFA}}
\newcommand{\cell}[1]{\boxit{\hbox to 17pt{\strut\hfil$#1$\hfil}}}
\newcommand{\head}[2]{\lower2pt\vbox{\hbox{\strut\footnotesize\it\hskip3pt#2}\boxit{\cell#1}}}
\newcommand{\boxit}[1]{\setbox4=\hbox{\kern2pt#1\kern2pt}\hbox{\vrule\vbox{\hrule\kern2pt\box4\kern2pt\hrule}\vrule}}
\newcommand{\Col}[3]{\hbox{\vbox{\baselineskip=0pt\parskip=0pt\cell#1\cell#2\cell#3}}}
\newcommand{\tapenames}{\raise 5pt\vbox to .7in{\hbox to .8in{\it\hfill input: \strut}\vfill\hbox to
.8in{\it\hfill scratch: \strut}\vfill\hbox to .8in{\it\hfill output: \strut}}}
\newcommand{\Head}[4]{\lower2pt\vbox{\hbox to25pt{\strut\footnotesize\it\hfill#4\hfill}\boxit{\Col#1#2#3}}}
\newcommand{\Dots}{\raise 5pt\vbox to .7in{\hbox{\ $\cdots$\strut}\vfill\hbox{\ $\cdots$\strut}\vfill\hbox{\
$\cdots$\strut}}}
\newcommand{\df}{\it} % use italic for definition terms. Idea: also use this to create an index of definitions, if MakeIndex is true.
\newcommand{\ZFf}{\ZF^{-\rm f}}
\newcommand{\ZFCf}{\ZFC^{-\rm f}}
\newcommand{\GBf}{\GB^{-\rm f}}
\newcommand{\GBCf}{\GBC^{-\rm f}}
\newcommand\ZFGCf{{\rm ZFGC}^{-\rm f}}
\newcommand\At{{\rm At}}
\newcommand\mr{\mathrel}
\newcommand\ol{\overline}
\newcommand\IE{{\rm IE}}
\newcommand\pw{\mathcal P}
\newcommand\calX{\mathcal X}
\let\id\relax
\DeclareMathOperator\id{id}% EJ: I don't see the point of making it \small, but in any case the original
\DeclareMathOperator\Eem{Eem}
\begin{document}

\begin{abstract}
 We consider the role of the foundation axiom and various anti-foundation axioms in connection with the nature and existence of elementary self-embeddings of the set-theoretic universe.
 %The truth or falsity of the Kunen assertion, the assertion that there are no such embeddings, for example, depends on the specific anti-foundational theory one adopts. On the one hand, it is relatively consistent with \ZFC\ without foundation that the Kunen assertion fails, for there are models of $\ZFCf$ in which there are definable nontrivial elementary embeddings $j\colon V\to V$. Indeed, in Boffa's anti-foundational theory \BAFA, the Kunen assertion is outright refutable, for in this theory there are numerous nontrivial elementary embeddings $j\colon V\to V$ of the universe to itself. Meanwhile, on the other hand, Aczel's anti-foundational theory $\GBCf+\AFA$, as well as Scott's theory $\GBCf+\SAFA$ and other anti-foundational theories, continue to prove the Kunen assertion, ruling out the existence of a nontrivial elementary embedding $j\colon V\to V$.
\end{abstract}

\maketitle

\noindent We shall investigate the role of the foundation axiom and the various anti-foundation axioms in connection with the nature and existence of elementary self-embeddings of the set-theoretic universe. All the standard proofs of the well-known Kunen inconsistency \cite{Kunen78:SaturatedIdeals}, for example, the theorem asserting that there is no nontrivial elementary embedding of the set-theoretic universe to itself, make use of the axiom of foundation (see \cite{Kanamori2004:TheHigherInfinite2ed,HamkinsKirmayerPerlmutter:GeneralizationsOfKunenInconsistency}), and this use is essential, assuming that \ZFC\ is consistent, because there are models of $\ZFCf$ that admit nontrivial elementary self-embeddings and even nontrivial definable automorphisms. Meanwhile, a fragment of the Kunen inconsistency survives without foundation as the claim in $\ZFCf$ that there is no nontrivial elementary self-embedding of the class of well-founded sets. Nevertheless, some of the commonly considered anti-foundational theories, such as the Boffa theory $\BAFA$, prove outright the existence of nontrivial automorphisms of the set-theoretic universe, thereby refuting the Kunen assertion in these theories.  On the other hand, several other common anti-foundational theories, such as Aczel's anti-foundational theory $\ZFCf+\AFA$ and Scott's theory $\ZFCf+\text{SAFA}$, reach the opposite conclusion by proving that there are no nontrivial elementary embeddings from the set-theoretic universe to itself. Thus, the resolution of the Kunen inconsistency in set theory without foundation depends on the specific nature of one's anti-foundational stance.

In this article, we should like to extend those results and examine the full range of possibility of these various anti-foundational axioms with the existence of such self-embeddings, showing for example that there are models of set theory having automorphisms, but no elementary self-embeddings (theorem \ref{Theorem:quineembed}), or having elementary self-embeddings, but no automorphisms (theorem \ref{Emil's Construction}), or having a prescribed group of automorphisms (theorem \ref{Theorem.PrescribedGroupOfAutomorphisms}), among other possibilities.

\section{Foundation and anti-foundation}

The {\df axiom of foundation} is one of the standard axioms of the Zermelo--Fraenkel (\ZFC) axiomatization of set theory, asserting that the set-membership relation $\in$ is well-founded, so that every nonempty set has an $\in$-minimal member. Thus, the foundation axiom allows for proofs by $\in$-induction, and indeed the axiom is equivalent to the $\in$-induction scheme. To give one immediate consequence, the axiom of foundation refutes $x\in x$ for any set $x$, for in this case $\{x\}$ would have no $\in$-minimal element; in particular, the axiom of foundation rules out the existence of \emph{Quine atoms}, sets $x$ for which $x=\{x\}$.  Beyond this, the axiom of foundation girds much of the large-scale conceptual framework by which many set-theorists understand the cumulative universe of all sets. For example, one uses it to prove in \ZF\ that every set appears at some level $V_\alpha$ of the von Neumann hierarchy, for if every element of a set $x$ appears in some $V_\alpha$, then $x$ itself appears in $V_{\beta+1}$, where $\beta$ is larger than the ranks of the elements of $x$, and so by $\in$-induction every set appears.
% Alternatively, one could simply define $\rank(x)=\sup\set{\rank(y)+1\mid y\in x}$ by $\in$-recursion and observe that $x\in V_{\rank(x)+1}$.
Similarly, the axiom of foundation implies that every transitive set $A$ is {\df rigid}, meaning that there is no nontrivial isomorphism of $\<A,{\in}>$ with itself, for if $\pi\colon A\to A$ is an isomorphism and $\pi$ fixes every element $b\in a$ for some $a\in A$, then it follows easily that $\pi$ must also fix $a$, and so $\pi$ is the identity function by $\in$-induction.

Meanwhile, there are several commonly considered anti-foundational set theories, which we shall now briefly review. Most of these theories  include the base theory we denote by $\ZFCf$, which consists of all axioms of \ZFC\ (including the collection and separation axiom schemes)
% EJ: Come to think of it, this hardly matters. Over $\ZFCf$ with
% replacement, collection is implied by any of \IE, \BAFA, $V=\WF(t)$
% for a set $t$, or global choice. The only of our theories where it
% is a problem is $V=\WF(A)$ + "$A$ is a proper class".
%
% JDH: Well, I think it does matter to specify one's theory accurately. The fact that
% replacement does not imply collection is itself an interesting result. Should we include a proof?  Perhaps that is a separate project, or done already elsewhere?
%
except for the axiom of foundation.\footnote{Just as in the case of set theory $\ZFC^{-\rm  p}$ without the power set axiom (see \cite{GitmanHamkinsJohnstone:WhatIsTheTheoryZFC-Powerset?}), one should take care to use the collection axiom plus separation in $\ZFCf$, rather than merely the replacement axiom, because these are no longer equivalent without foundation, although they are again equivalent without foundation in the presence of any of \IE, \BAFA, $V=\WF(t)$ for a set $t$, or global choice. To see the inequivalence, consider a model with $\omega$ many Quine atoms, but take only those sets in some $\WF(t)$ for any finite set $t$ of such atoms, so that the full set of atoms never appears; the replacement axiom holds in this model, but not collection.} Similarly, $\GBCf$ denotes \GBC\ without the axiom of foundation. In particular, please note that the theory $\GBCf$ includes the global axiom of choice, which asserts that there is a class relation that is a set-like well-ordering of all sets; this will be important in a few of our arguments. We shall sometimes also desire the global choice axiom in a context closer to $\ZFCf$; in order to achieve this, we expand the language of set theory with a new function symbol $C$, and work in the theory we denote $\ZFGCf$, which has all the $\ZFCf$ axioms including instances of the collection and separation schemes for the expanded language, plus the assertion that $C\colon\Ord\to V$ is a bijection of the class of ordinals with the class of all sets.

Perhaps the most commonly used axiom in non-well-founded set theory is the {\df anti-foundation axiom}, denoted \AFA, first investigated by Forti and
Honsell~\cite{FH83} and then popularized by Aczel~\cite{Acz88} and Barwise and Moss~\cite{BM96}. The axiom has found numerous applications in computer science and formal semantics. Viale \cite{Viale2004:CumulativeHiearchyAndConstructibleUniverseOfZFA} investigated the constructible universe under \AFA. In one formulation, \AFA\ asserts that every directed graph $\<A,e>$ has a unique
\emph{decoration}, which is a mapping $a\mapsto f(a)$ of the elements of $A$ to sets, such that
 $$f(a)=\{f(b)\mid b \mr e a\}$$
for every~$a\in A$. This axiom therefore extends Mostowski's observation on well-founded relations to apply universally to all directed graphs. For example, by considering a graph with exactly one point and an edge from that point to itself, it follows that under \AFA, there is a unique Quine atom $a=\set{a}$.

Since there are many equivalent formulations of the anti-foundation axiom, let us mention a few more of them.  Define that a \emph{partial (inverse) bisimulation} between directed graphs $\<A,e>$ and~$\<A',e'>$ is a relation ${\sim}\of A\times A'$ such that:\footnote{These are technically \emph{inverse} bisimulations, which have the appropriate directionality for expressing \AFA\ using the $\in$ relation, as we have done here, but we shall drop this `inverse' qualifier. Other accounts of \AFA\ use the usual bisimulation directionality, but instead invert the set-membership relation to $\ni$, in effect inverting the direction of all the graphs here.}
\begin{enumerate}
\item For every $x\in A$ and~$x',y'\in A'$ such that $x\sim x'$
and~$y'\mr{e'}x'$, there exists a $y\in A$ such that $y\mr e x$
and~$y\sim y'$.
\item For every $x'\in A'$ and~$x,y\in A$ such that $x\sim x'$
and~$y\mr e x$, there exists a $y'\in A'$ such that $y'\mr{e'}x'$
and~$y\sim y'$.
\end{enumerate}
Such a bisimulation is \emph{total} if $\dom(\sim)=A$ and~$\ran(\sim)=A'$, in
which case the two structures are called \emph{bisimilar}. The anti-foundation axiom \AFA\
is equivalent to the statement that every binary relation~$\<A,e>$ is
bisimilar with~$\<t,\in>$ for a unique transitive set~$t$. Thus, the axiom generalizes the situation in \ZF, where Mostowski's argument shows that every well-founded directed graph is bisimilar to a unique transitive set; under \AFA\ we get this for all directed graphs.

For another variant of \AFA, define that an \emph{accessible pointed graph} is a triple~$\<A,e,a>$ where $e\of A\times A$ and $a$ is an element of $A$ to which every element of~$A$ is related by the reflexive transitive closure of~$e$. For example, the \emph{canonical picture} of a set~$a$ is $\<\TC(\{a\}),\in,a>$, and this is an accessible pointed graph. Again generalizing the situation in \ZF\ for well-founded relations, the anti-foundation axiom \AFA\ is equivalent to the assertion that an accessible pointed graph~$\<A,e,a>$ is isomorphic to a canonical picture of a set if and only if it is strongly extensional, meaning that every partial bisimulation from~$\<A,e>$ to itself agrees with the identity relation on~$A$.

Scott~\cite{Sco60} considered an anti-foundation axiom---following Aczel we shall denote it by \SAFA---based on trees instead of arbitrary accessible pointed graphs. An accessible pointed graph~$\<A,e,a>$ is a \emph{tree} if every vertex has a unique $e$-path to $a$. For example, the \emph{canonical tree picture} of a set~$a$ is the tree whose vertices are finite sequences $\<x_0,\dots,x_n>$ where $x_0=a$, and $x_{i+1}\in x_i$ for every~$i<n$; we may think of the tree as growing downwards, so that child nodes correspond to elements. In \ZF, one may easily prove that the canonical tree picture of a set has no nontrivial automorphisms, since every such automorphism would give rise to an automorphism of the transitive closure of the set. The Scott anti-foundation axiom \SAFA\ generalizes this to the non-wellfounded realm by asserting that a tree is isomorphic to a canonical tree picture of a set if and only if it has no nontrivial automorphism. To illustrate, observe that under \SAFA, there is precisely one Quine atom: there cannot be two, because if $a$ and $b$ were distinct Quine atoms, then the canonical tree picture of the doubleton set $\set{a,b}$ would have a nontrivial automorphism, swapping $a$ and $b$; and there must be at least one, because the tree consisting of a single infinite descending chain is rigid, and so there must be sets $a_n$ with $a_n=\set{a_{n+1}}$; but these must all be equal, because if $a_n\neq a_m$, then the tree picture of the doubleton $\set{a_n,a_m}$ consists of two descending chains joined at the top, and this tree is not rigid.

More generally, we say that two transitive sets $s$ and $t$ are isomorphic if there is an isomorphism of the structure $\<s,\in>$ with $\<t,\in>$.  In the case of non-transitive sets, we say that two sets $x$ and~$y$ are isomorphic, if they can be placed into transitive sets having an isomorphism mapping $x$ to $y$, or equivalently, if the transitive closures of $\set{x}$ and $\set{y}$ are isomorphic by an isomorphism mapping $x$ to $y$. Thus, two sets are isomorphic if and only if their canonical pictures are isomorphic as accessible pointed graphs. Finsler~\cite{Fin26} developed non-well-founded set theory based on the informal principle that the universe of sets is maximal, subject to maintaining the axioms of extensionality and of \emph{isomorphism extensionality}:
\begin{quote}
(\IE) Isomorphic sets are equal.
\end{quote}
This axiom implies immediately that there is at most one Quine atom, since any two would be isomorphic. Aczel~\cite{Acz88} formalized Finsler's idea in an axiom denoted \FAFA, which asserts that an accessible pointed graph~$G=\<A,e,a>$ is isomorphic to a canonical picture of a set if and only if it is extensional (that is, satisfies the axiom of extensionality) and the accessible pointed graph $Gu$ of points accessing~$u$ is not isomorphic to $Gv$ for distinct $u,v\in A$.

Actually, Aczel introduced an entire family of axioms $\AFA^\sim$, one for
each so-called regular bisimulation concept~$\sim$. Each axiom characterizes
canonical pictures as accessible pointed graphs satisfying a version of extensionality
appropriate for~$\sim$, and particular choices of~$\sim$ yield the
axioms \AFA, \SAFA, and \FAFA. Each~$\AFA^\sim$ can be thought of as
consisting of two parts: existence (stating that certain
accessible pointed graph correspond to canonical pictures) and uniqueness (asserting a
strengthening of the extensionality axiom). Larger~$\sim$ lead to
weaker existence assertions and stronger uniqueness assertions. The
extremes are \FAFA\ and~\AFA, which correspond to the smallest and
largest regular bisimulation, respectively. In particular,
the uniqueness part of every~$\AFA^\sim$ implies~\IE, which is the
uniqueness part of~\FAFA.

Boffa~\cite{Boffa1972:ForcingEtNegationDeLAxiomDeFondement} introduced a theory which maximizes the universe of sets with respect to the plain axiom of extensionality, thus badly violating the isomorphism extensionality axiom \IE. A weak version of Boffa's axiom postulates that every extensional binary relation~$\<a,e>$ is isomorphic to~$\<t,\in>$ for a transitive set~$t$. The drawback of
this statement is that it provides no easy way of extending
existing sets: even if $\<a,e>$ already includes a set $\<t_0,\in>$ as
its transitive part, the isomorphism from~$a$ to~$t$ does not have to
preserve it, as there may be many sets isomorphic to~$t_0$. For this
reason, the full Boffa's axiom (the axiom of \emph{superuniversality}), denoted here \BAFA, asserts that for
every transitive set~$t_0$ and every extensional binary relation~$\<a,e>$
that end-extends~$\<t_0,\in>$, meaning $t_0\of a$
and~${\in}\restrict t_0=e\cap(a\times t_0)$, there exists a transitive set~$t$, and
an isomorphism from~$\<a,e>$ to~$\<t,\in>$ that is the identity on~$t_0$. Thus, the axiom asserts a kind of saturation property for the transitive sets, namely, that they realize the types expressed by extensional binary relations end-extending a given transitive set. For example, under \BAFA, there must be a proper class of Quine atoms, since we can extend the canonical picture of a given set of Quine atoms by a relation that describes what it would be like to have one more, or $\kappa$ many more for any cardinality $\kappa$, and this new relation must be realized in a transitive set, which will have corresponding additional actual Quine atoms.

This level of saturation causes a high degree of homogeneity in any set-theoretic universe satisfying \BAFA, where we have many distinct but isomorphic copies of whatever structure is produced. Such homogeneity, in turn, can cause a difficulty in class-length constructions by transfinite recursion, since the constructed objects are rarely unique, and so one cannot usually pick out the precise continuation of a given transfinite construction without a class choice principle. For this reason, it is convenient to include in Boffa's theory the global axiom of choice, which allows one to make choices in such a context, relative to a fixed class well-ordering of the sets. Thus, we work with \BAFA\ over the theory~$\ZFGCf$ or~$\GBCf$, which includes the global axiom of choice. Boffa's theory has been used as a basis of formalization of nonstandard analysis by Ballard and \Hrbacek~\cite{BH92}.

In this article, we shall also make a few arguments in the theory asserting that the universe of sets is generated by a set or class of Quine atoms. Such theories are closely connected with the permutation models of set theory. Permutation models were originally constructed for the set theory \ZFA\ with atoms (objects with no elements, but distinct from the empty set), which requires weakening of the axiom of extensionality. Alternatively, one can replace atoms with Quine atoms, simply by redefining the $\in$-relation on the atoms to make them all into self-singletons, that is, into Quine atoms; in this way extensionality is preserved at the expense of dropping foundation, which may be considered a less drastic deviation from the axioms of~\ZFC.  Let us explain the precise meaning of being ``generated''. For any transitive class $A$, we define the \emph{cumulative hierarchy over $A$}, the class denoted $\WF(A)$, as the elements appearing in the following recursive hierarchy:
\begin{align*}
\WF_0(A)&=A,\\
\WF_{\alpha+1}(A)&=\mathcal P(\WF_\alpha(A)),\\
\WF_\lambda(A)&=\bigcup_{\alpha<\lambda}\WF_\alpha(A)\quad\text{for
limit $\lambda$}
\end{align*}
For example, the class of well-founded sets is simply $\WF(\emptyset)$. When $A$ is a proper class, note that we take only the sub\emph{sets} of the previous stage. In weaker set theories that may not be able to formalize this recursion on classes directly, we may equivalently define that $\WF(A)$ is the union of $\WF(t)$ for all sets $t\of A$, and similarly $\WF_\alpha(A)=\bigcup_{t\of A}\WF_\alpha(t)$. Every set $x\in \WF(A)$ has a corresponding rank, the least ordinal stage $\alpha$ for which $x\in \WF_{\alpha+1}(A)$. Such a rank function leads to a weak form of the axiom of foundation in $\WF(A)$, namely, every set $x$ having an element $y$ in $\WF(A)$ has such an element $y$ of least rank, and such an element $y$ will either be in $A$ or have no elements in common with $x$, since any such element would have a lower rank than $y$ in $x$. In particular, every infinite $\in$-descending sequence containing a set in $\WF(A)$ must eventually reach an element of $A$. The class $\WF(A)$ is transitive, as well as {\em full}, meaning that every subset of $\WF(A)$ is an element of $\WF(A)$, and indeed, $\WF(A)$ is the smallest full class containing $A$. When $V=\WF(A)$, then we say that the universe is \emph{generated} by $A$, and in some of our arguments below, we shall consider models generated by a class of Quine atoms. Let us introduce the notation $\At$ to refer always to the class of Quine atoms, so that $V=\WF(\At)$ just in case the universe is generated by its Quine atoms.

%I streamlined this presentation, since it seemed overly complicated to me.
%
%Let $T$ be a transitive class. It is easy to see that the following
%conditions are equivalent for any set~$x$:
%\begin{enumerate}
%\item\label{item:1} Every set~$y\ni x$ contains an element of~$T$, or a set disjoint
%from~$y$.
%\item (Assuming \DC.) Every sequence $\{x_n:n\in\omega\}$ such that
%$x_0=x$ and $x_{n+1}\in x_n$ for every~$n\in\omega$ has a member
%$x_n\in T$.
%\item $x$ is in~$\WF_\alpha(t)$ for some $\alpha\in\Ord$ and a
%subset~$t\of T$, where we define
%\begin{align*}
%\WF_0(t)&=t,\\
%\WF_{\alpha+1}(t)&=\mathcal P(\WF_\alpha(t)),\\
%\WF_\lambda(t)&=\bigcup_{\alpha<\lambda}\WF_\alpha(t)\quad\text{for
%limit $\lambda$}
%\end{align*}
%by transfinite recursion.
%\end{enumerate}
%The class of all such sets~$x$ is denoted~$\WF(T)$. This class is
%transitive and \emph{full}, i.e., every subset of~$\WF(T)$ is an
%element of~$\WF(T)$. In fact, $\WF(T)$ is the smallest full class
%including~$T$. Note that $\WF:=\WF(\varnothing)$ is the class of all
%well-founded sets.
%
%Let $A=\bigl\{x:x=\{x\}\bigr\}$ be the class of all Quine atoms. We
%say that the universe is \emph{generated} by~$A$ if $V=\WF(A)$. As can
%be seen from~(\ref{item:1}), this statement asserts a weak version of the
%axiom of foundation, namely that $\in\minus\id_A$ is well founded.

All of the anti-foundational theories we consider in this article are equiconsistent with~\ZFC\ and therefore also with~\GBC. A convenient tool for showing the consistency of non-well-founded set theories is \emph{Rieger's theorem} \cite{Rie57} (cf.~\cite{Acz88}), which shows in $\ZFCf$ that if $M$ is a class endowed with a relation $E\of M\times M$ that is extensional, set-like (meaning that the $E$-predecessors of any $a\in M$ form a set) and full (in the sense that every subset of~$M$ is the $E$-extension of some~$a\in M$), then $\<M,E>$ satisfies all the axioms of~$\ZFCf$. If global choice is available, then we may also expand $\<M,E>$ to a model of global choice. In particular, any full transitive class, such as~$\WF(A)$ for any transitive class $A$, is a model of~$\ZFCf$.

With suitable choices for such relations $E$ (as explained in \cite{Acz88}), we may arrange that $\<M,E>$ is extensional, set-like and full, while also satisfying $\AFA$, $\SAFA$, $\FAFA$, %(as well as $\AFA^\sim$ for any~$\sim$ with a sufficiently absolute definition)  JDH: I omitted this, since we don't explain the details anyway, and it mainly just makes everything seem complicated
or $\BAFA$, whichever we prefer. In this way, inside any model of \GBC\ we may construct models of $\GBCf$ plus any of these anti-foundational theories. Furthermore, the $\WF$ of any full model~$\<M,E>$ is isomorphic to the $\WF$ of the universe in which it is constructed, and one may also incorporate classes into this phenomenon. It follows that all these anti-foundational theories $T$ are conservative over~\ZFC\ (and hence also \GBC), in the sense that $T\vdash\phi^{\WF}$ if and only if $\ZFC\vdash\phi$ for any first-order formula $\phi$, since every model of \ZFC\ will arise as the $\WF$ of a model of the anti-foundational theory $T$, and similarly every model of \GBC\ arises as the well-founded part (including both sets and classes) of a model of $T$.

We have mentioned that in any model of $\ZFGCf+\BAFA$, there are a proper class of Quine atoms, and for any class $A$ of Quine atoms, we may form the transitive class  $\WF(A)$ inside this model. In particular, the theories asserting $\GBCf$ plus ``the universe is generated by a proper class of Quine atoms,'' or ``the universe is generated by a set of five Quine atoms,'' or ``\ldots by a set of precisely $\aleph_6$ many Quine atoms,'' and so on, respectively, for any definable cardinality whose definition is absolute to $\WF(A)$, are each equiconsistent with \ZFC\ and conservative over \ZFC\ for assertions about the well-founded sets. One may also establish this directly from Rieger's theorem rather than via \BAFA.

\section{Some meta-mathematical issues}\label{Section.MetaMathematicalIssues}

A number of meta-mathematical issues arise in any formalization of the Kunen inconsistency (we refer the readers to the discussion in the preliminary section of \cite{HamkinsKirmayerPerlmutter:GeneralizationsOfKunenInconsistency}). The most obvious issue, of course, is that the quantifier involved in the assertion ``there is no nontrivial elementary embedding $j$'' is a second-order quantifier, not directly formalizable in the usual first-order theories such as \ZFC. Many set theorists prefer to interpret all talk of classes in ZFC as referring to the first-order definable classes, and with such a formalization, the Kunen inconsistency becomes a scheme, asserting of each possible definition of $j$ that it isn't an elementary embedding of the universe. (For example, Kanamori \cite{Kanamori2004:TheHigherInfinite2ed} adopts this approach.) Nevertheless, this interpretation seems to miss much of the substance of the theorem, because there is an elementary proof of this formulation of the Kunen inconsistency, a simple diagonal argument not relying on the axiom of choice or any of the other combinatorial methods usually associated with the Kunen inconsistency (see \cite{Suzuki1999:NoDefinablejVtoVinZF} and further discussion in \cite{HamkinsKirmayerPerlmutter:GeneralizationsOfKunenInconsistency}).

So it seems natural to use a second-order set theory, such as \Godel--Bernays or Kelly--Morse set theory. In \GBC, we have class quantifiers for expressing ``$\exists j$,'' but then a second problem arises, namely, that it is not directly possible to express the assertion ``$j$ is elementary'' in \GBC. Kunen himself originally formulated his theorem in Kelley--Morse set theory \KM\ precisely for this reason, since \KM\ proves the existence of a full satisfaction class for first-order truth, making the assertion ``$j$ is elementary'' expressible in \KM. The Kelley-Morse theory, however, is strictly stronger theory than~\ZFC, even for arithmetical statements, and has a strictly higher consistency strength. Meanwhile, one can actually carry out Kunen's argument in the weaker theory \GBC\ and even in $\ZFC(j)$, which is equiconsistent with and conservative over $\ZFC$, by weakening the full elementarity of $j$ to the assertion merely that ``$j$ is $\Sigma_1$-elementary,'' which is formalizable in the first order language of set theory with a predicate for the class $j$. The point is that Kunen's argument actually proves this stronger statement, that there is no nontrivial $\Sigma_1$-elementary embedding $j$ from $V$ to $V$, and indeed, no $\Delta_0$-elementary cofinal embedding from $V$ to $V$ (see the discussion in \cite{HamkinsKirmayerPerlmutter:GeneralizationsOfKunenInconsistency}). Part of the reason for this is a lemma of Gaifman's, which asserts that if $j\colon V\to V$ is $\Sigma_1$-elementary, then it is $\Sigma_n$-elementary for every meta-theoretic natural number $n$,
by an induction carried out in the meta-theory. One issue here, however, is that Gaifman's lemma makes use of the fact that in \ZF, every $\Sigma_1$-elementary embedding $j\colon V\to V$ is cofinal, in the sense that $\Union j[V]=V$, and this is no longer necessarily true in the absence of foundation, even with full elementarity, as shown in theorem \ref{Theorem:quineembed} statement \ref{item:6}, although we may still assert that $\bigcup j[V]$ is full. It is relatively consistent with~$\ZFCf$ that there exists a $\Sigma_1$-elementary embedding $j\colon V\to V$ which is not elementary (although we omit the proof),
% EJ: I have no reference for this. The counterexample I can construct
% would take another page or so, I don't think it's a good idea to
% include it.
and so this approach to formalizing the Kunen inconsistency statement does not fully succeed.

Some set theorists note that the proofs of the Kunen inconsistency show in fact that there can be no nontrivial elementary embedding of the form
$j\colon V_{\lambda+2}\to V_{\lambda+2}$, a stronger statement that is expressible in the first-order language of set theory and provable in \ZFC\ with no talk of classes. This formulation of the Kunen assertion, however, is not suitable in a context without foundation, since there could be an embedding $j\colon V\to V$ which is nontrivial, but only on ill-founded sets (as in theorem~\ref{Theorem.quineauto}), and so ruling out $j\colon V_{\lambda+2}\to V_{\lambda+2}$ does not settle the question for $j\colon V\to V$ in the anti-foundational context.

In this article, we shall take a pragmatic approach. Rather than
attempting to give a universally applicable definition of the Kunen
assertion, we shall instead simply present the strongest results we can prove for
each of the particular anti-foundational theories on a case-by-case basis. Our results on the
nonexistence of embeddings usually apply already to
$\Sigma_1$-elementary embeddings, and we formulate them generally over~$\GBCf$. Results on the existence of nontrivial
embeddings provide definable embeddings in the $\ZFCf$ or $\ZFGCf$
versions of the theories, in which case the elementarity of the
embedding can be formalized as an infinite schema. Sometimes the
theory proves the existence of definable classes with a
stronger property (given by a single formula) which implies the
infinite elementarity schema; in particular, this is the case for
\emph{automorphisms} of the universe, meaning bijections
$j\colon V\to V$ satisfying $x\in y\iff j(x)\in j(y)$. A similar situation will arise with the elementary embeddings that we construct by threading a back-and-forth system of embeddings from one model to another.

Let us stress that throughout the paper, elementary embeddings and
isomorphisms are only supposed to be elementary with respect to the first-order language of set theory; they need not be elementary with respect to the second-order language of classes, when working in $\GBCf$, or with respect to the language including the global choice bijection of $\Ord$ with $V$, when working in $\ZFGCf$.

\section{The Kunen inconsistency on the well-founded sets}

Let us now finally begin in earnest with the observation that a fragment of the Kunen inconsistency survives in set theory without the foundation axiom as the claim that there are no nontrivial elementary self-embeddings on the well-founded part of the universe.

\begin{theorem}\label{Theorem.NojWFtoWF}
 Work either in $\GBCf$ or in $\ZFCf$. Then there is no nontrivial
 $\Sigma_1$-elementary embedding $j\colon \WF\to\WF$. In particular,
 every $\Sigma_1$-elementary embedding $j\colon V\to V$ fixes every
 well-founded set: $j(x)=x$ for all $x\in \WF$. Furthermore, the
 range~$j[V]$ of any such embedding is a transitive full class.
\end{theorem}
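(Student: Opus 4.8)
The plan is to prove the three assertions in sequence, exploiting the fact that the Kunen-style argument actually only needs $\Sigma_1$-elementarity once we restrict attention to well-founded sets, where $\in$-induction is available.

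First I would establish that any $\Sigma_1$-elementary embedding $j\colon\WF\to\WF$ is the identity. The key observation is that on the well-founded sets, ranks make sense and are absolute: the statement ``$x$ has rank $\alpha$'' is $\Delta_0$-expressible given $\alpha$, or at least $\Sigma_1$, so $j$ must send the rank of $x$ to the rank of $j(x)$. In particular $j$ restricts to a $\Sigma_1$-elementary (indeed order-preserving) map on the ordinals, hence $j(\alpha)\ge\alpha$ for every well-founded ordinal $\alpha$ by the usual induction. Now I would argue by $\in$-induction on $x\in\WF$ that $j(x)=x$: suppose $j(y)=y$ for every $y\in x$. Since $j$ is $\Sigma_1$-elementary and ``$y\in x$'' is $\Delta_0$, each such $y=j(y)\in j(x)$, so $x\of j(x)$. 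For the reverse inclusion, if $z\in j(x)$ then, because $j(x)$ is well-founded and $j$ preserves rank, $z$ has rank below the rank of $x$; the subtle point is to see that $z$ is in the range of $j$. Here I would use that every element of $j(x)$ of rank $<\rank(x)$ lies in $V_{\rank(x)}^{\WF}$, and $j$ fixes this $V_\alpha$ pointwise by the induction hypothesis applied through the (well-founded, hence set-sized) structure of $V_\alpha$—so $z=j(z)$ for some $z\in V_\alpha^{\WF}$, and then $\Sigma_1$-elementarity (applied to the $\Delta_0$ fact $z\in x$, pulled back along $j$) gives $z\in x$. This closes the induction and yields $j=\id$ on $\WF$; triviality of $j$ follows. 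I would want to present the rank argument carefully, as packaging ``$j$ fixes $V_\alpha^{\WF}$ pointwise'' cleanly is where the real content sits—this is the main obstacle, essentially the $\ZFCf$ reincarnation of the classical rigidity-of-transitive-sets fact quoted earlier in the paper.

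Next, the ``in particular'' clause: if $j\colon V\to V$ is $\Sigma_1$-elementary, then since $\WF$ is $\Delta_0$-definable (membership in $\WF$, equivalently well-foundedness of the transitive closure, is absolute enough to be preserved by $\Sigma_1$-elementarity in the relevant direction), $j$ maps $\WF$ into $\WF$; more carefully, ``$x$ is well-founded'' is $\Pi_1$ and ``$x$ has rank $\alpha$'' is $\Sigma_1$, and the latter suffices to force $j(x)\in\WF$ with the same rank whenever $x\in\WF$. Thus $j\restriction\WF\colon\WF\to\WF$ is $\Sigma_1$-elementary, and by the first part it is the identity, i.e.\ $j(x)=x$ for all $x\in\WF$.

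Finally, for fullness of $j[V]$: transitivity of $j[V]$ follows from $\Sigma_1$-elementarity in the usual way—if $a\in j(x)$, then by $\Sigma_1$-elementarity applied to ``$\exists y\,(y\in x)$''-type statements, more precisely since $j$ fixes every ordinal and hence every well-founded set, one shows any element of a value $j(x)$ is itself a value; I would instead invoke the standard fact that the range of a $\Sigma_1$-elementary embedding of $V$ to $V$ is transitive, proved by $\in$-induction on the rank of elements using that $j$ fixes ordinals. For fullness, given any subset $A\of j[V]$, I would pull back along the (injective) $j$ to get a class $\bar A=\{\,x\mid j(x)\in A\,\}$; if $\bar A$ happens to be a set (which it is, being in bijection with $A$ via $j^{-1}$, and $j^{-1}$ restricted to $A$ is a set-sized function since $A$ is a set), then $\bar A$ is a set, $j(\bar A)$ is defined, and one checks $j(\bar A)=A$ using transitivity of $j[V]$ together with $\Sigma_1$-elementarity of ``$z\in w$''. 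Hence every subset of $j[V]$ is an element of $j[V]$, so $j[V]$ is full. I expect the bookkeeping around ``$\bar A$ is a set'' to require the collection/separation schemes rather than mere replacement, consistent with the paper's footnoted warning, but no deep obstacle arises here beyond care with the $\Sigma_1$ bound and with the direction of elementarity.
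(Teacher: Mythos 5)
Your first and central step---proving by $\in$-induction (or induction on rank) that every $\Sigma_1$-elementary $j\colon\WF\to\WF$ is the identity---contains a fatal gap, and it is precisely the gap that makes the Kunen inconsistency a theorem with real content rather than an exercise. In your induction step you assert that if $z\in j(x)$ then $z$ has rank below $\rank(x)$. What rank-preservation actually gives is $\rank(j(x))=j(\rank(x))$, so elements of $j(x)$ only have rank below $j(\rank(x))$, which may be far larger than $\rank(x)$ when $j$ moves the ordinal $\rank(x)$. The induction therefore breaks exactly at the critical point: if $\kappa$ is the least ordinal moved, then $\kappa\in j(\kappa)$ but $\kappa\notin\kappa$ and $\kappa$ is not of the form $j(\beta)$ for $\beta<\kappa$, so the reverse inclusion $j(\kappa)\of\kappa$ fails and the induction cannot close. (If such an induction worked, it would give an elementary choice-free proof of the Kunen inconsistency, and essentially the same induction applied to $j\colon V\to M$ would refute measurable cardinals.) The paper does not attempt any such direct argument; it invokes the Kunen inconsistency as a black box, either by observing that $\WF$ together with its classes is a model of $\GBC$ in which $j$ would violate the $\GBC$ form of the theorem, or by forming the critical sequence $\kappa_{n+1}=j(\kappa_n)$, $\lambda=\sup_n\kappa_n$, noting $j(\lambda)=\lambda$, and applying the $V_{\lambda+2}$ formulation inside $\WF$. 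You must cite Kunen's theorem in one of these forms; it cannot be rederived by induction.

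Two further points. For the ``in particular'' clause, it is not enough to note that $x\in\WF$ is $\Pi_1$ and that rank is $\Sigma_1$: to conclude that $j\restrict\WF$ is $\Sigma_1$-elementary \emph{as a map from $\WF$ to $\WF$}, one must pull an existential witness for $\WF\satisfies\exists x\,\phi(x,j(u))$ back into $\WF$ on the left side. The paper does this by showing $j[\Ord]$ is cofinal in $\Ord$ and $j(\WF_\alpha)=\WF_{j(\alpha)}$, so the witness lies in some $\WF_{j(\alpha)}$ and the bounded statement transfers; your sketch skips this Tarski--Vaught step. For transitivity of $j[V]$, your suggestion of ``$\in$-induction on the rank of elements'' is unavailable, since the sets involved are ill-founded and have no rank; the correct argument uses the axiom of choice to fix a bijection $f\colon\kappa\to u$, so that any $x\in j(u)$ equals $j(f)(\alpha)=j(f(\alpha))$ because $j$ fixes ordinals. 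Your treatment of fullness (pulling a subset $x\of j[V]$ back to the set $u=j^{-1}[x]$ and checking $j(u)=j[u]=x$ using transitivity) is essentially the paper's argument and is fine.
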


\begin{proof}
Consider the structure $\mathcal{W}$ obtained by restricting the universe to have only the objects in $\WF$ and to have only the classes $A$ with $A\of\WF$. Since $\WF$ is a definable class in the full universe, it follows that $A\intersect\WF$ is a class whenever $A$ is, and it is an elementary exercise to see that $\mathcal{W}$ is a model of \GBC. In particular, if $j\colon \WF\to\WF$ were a nontrivial $\Sigma_1$-elementary embedding of $\WF$, then $j$ would be a class in $\mathcal{W}$ and furthermore would be a nontrivial $\Sigma_1$-elementary embedding of the entire set-theoretic universe from the perspective of $\mathcal{W}$, contrary to the original Kunen inconsistency, which shows that there can be no such embedding in any model of \GBC.

Alternatively, if $j\colon \WF\to\WF$ were nontrivial and $\Sigma_1$-elementary, then since $\WF\satisfies\ZFC$ it follows that $j$ must have a critical point $\kappa$, and then one can define the critical sequence $\kappa_{n+1}=j(\kappa_n)$ and $\lambda=\sup_n\kappa_n$. It follows easily that $j(\lambda)=\lambda$ and consequently $j\restrict V_{\lambda+2}\colon V_{\lambda+2}\to V_{\lambda+2}$ is an elementary embedding from $V_{\lambda+2}$ to itself, which violates one of the \ZFC\ formulations of the Kunen inconsistency inside $\WF$.

For the further claims of the theorem, we claim that if $j\colon V\to V$ is $\Sigma_1$-elementary, then $j\restrict\WF\colon \WF\to\WF$ is also $\Sigma_1$-elementary and hence trivial by the arguments of the previous paragraphs. To see this, note first that $x\in\WF$ is $\Pi_1$ definable, since $x\in\WF$ just in case there is no infinite $\in$-descending sequence starting from $x$, and so $j[\WF]\of\WF$. (And the reader may find it helpful to note that the $\Sigma_1$-elementarity of $j$ implies that $j$ preserves true $\Sigma_2$ assertions.) Similarly, $j[\Ord]$ is cofinal in $\Ord$, since being an ordinal is also $\Pi_1$ definable (note that the usual $\Delta_0$ definitions of being an ordinal in \ZFC\ do not work in the anti-foundational context). Since $x\in\WF_\alpha$ just in case there is a ranking function, a function $f:t\to\alpha$ for some transitive set $t$ for which $x\in t$ and $a\in b\implies f(a)< f(b)$. Since membership in $\WF_\alpha$ is therefore a $\Sigma_1$ property in parameter $\alpha$, it follows that $y=\WF_\alpha$ has complexity $\Sigma_1\wedge\Pi_1$ in $y$ and $\alpha$, namely, every set in $y$ has an $\alpha$-ranking function and every set with an $\alpha$-ranking function is in $y$, and so $j(\WF_\alpha)=\WF_{j(\alpha)}$. To see that $j\restrict\WF\colon \WF\to\WF$ is $\Sigma_1$-elementary, notice that if there is an existential witness on the right side, $\WF\satisfies \exists x\,\phi(x,j(u))$, where $\phi$ is $\Delta_0$, then the witness $x$ is found in some $\WF_\beta$, which is contained in some $\WF_{j(\alpha)}$, since $j[\Ord]$ is cofinal in $\Ord$, and so $V\satisfies\exists x\in\WF_{j(\alpha)}\,\phi(x,j(u))$, which implies $\exists x\in\WF_\alpha\,\phi(x,u)$ by the $\Sigma_1$-elementarity of $j$, and so $\WF\satisfies\exists x\, \phi(x,u)$ as desired.

In order to show that the range of $j$ is transitive, assume $x\in j(u)$, and by the axiom of choice fix a bijection $f\colon \kappa\to u$ with some ordinal $\kappa$. It follows that $j(f)\colon j(\kappa)\to j(u)$ is also a bijection, and so $x=j(f)(\alpha)$ for some ordinal $\alpha$. But since $j$ is the identity on ordinals, it follows that $\alpha=j(\alpha)$ and so $x=j(f)(\alpha)=j(f)(j(\alpha))=j(f(\alpha))$, which places $x$ into the range of $j$, as desired.

Finally, to verify fullness, suppose $x\of j[V]$. Let $u=j^{-1}[x]$, so that $x=j[u]$. Clearly
$j[u]\of j(u)$, but since $j(u)\of j[V]$ by transitivity, we achieve also the converse, and so $j(u)=j[u]=x$.
\end{proof}

The fact that the range of such an embedding $j$ must be transitive makes them totally different in character than the kinds of embeddings that are usually considered in the \ZFC\ context, which never have transitive range.

\section{Theories with nontrivial self-embeddings of the universe}

We shall now prove that \ZFC\ without the foundation axiom is relatively consistent with the existence of nontrivial automorphisms of the set-theoretic universe. In particular, if \ZFC\ is consistent, then the Kunen inconsistency assertion is not provable in \ZFC\ or \GBC\ without making use of the axiom of foundation. We begin with the theories where the universe is generated by its Quine atoms, a situation where the automorphisms and
elementary embeddings have a very transparent structure, admitting a complete description.

\begin{theorem}\label{Theorem.quineauto}
Work in $\GBCf$ or~$\ZFCf$ and assume that the universe is generated by its Quine atoms, so that $V=\WF(\At)$.
\begin{enumerate}
\item If $j\colon V\to V$ is an automorphism of the universe $V$, then
$j\restrict \At$ is a permutation of the class of atoms~$\At$.
\item Every permutation $\sigma\colon \At\to \At$ of the atoms has a unique
extension to an automorphism $\ol\sigma\colon V\to V$ of the entire universe.
\end{enumerate}
\end{theorem}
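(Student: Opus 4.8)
The plan is to handle the two parts separately. Part~(1) is immediate from the fact that an automorphism of $\langle V,\in\rangle$ is elementary: being a Quine atom is expressed by the first-order formula $x=\{x\}$, so if $j\colon V\to V$ is an automorphism, then $a\in\At\iff j(a)\in\At$, and the same applies with $j^{-1}$ in place of $j$. Since $j$ is a bijection, this forces $j[\At]=\At$, so $j\restriction\At$ is a permutation of~$\At$.

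For part~(2), the obvious idea of defining $\bar\sigma$ by $\in$-recursion through $\bar\sigma(x)=\{\bar\sigma(y)\mid y\in x\}$ does not literally make sense, because atoms satisfy $a\in a$. Instead I would recurse along the relation $E$ defined by $y\mathrel{E}x\iff y\in x\wedge x\notin\At$. This relation is set-like, and it is well-founded: an infinite $E$-descending sequence would be an infinite $\in$-descending sequence in $\WF(\At)$ consisting entirely of non-atoms, contradicting the fact noted above that every such sequence must eventually reach an atom. So I set $\bar\sigma(a)=\sigma(a)$ for atoms $a$ and $\bar\sigma(x)=\bar\sigma[x]=\{\bar\sigma(y)\mid y\in x\}$ for non-atoms $x$; one checks that the second equation in fact holds for atoms too, since $\sigma(a)$ is again a Quine atom and hence $\{\bar\sigma(a)\}=\{\sigma(a)\}=\sigma(a)$. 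Collection and separation make $\bar\sigma(x)$ a set, fullness of $\WF(\At)$ makes $\bar\sigma\colon V\to V$, and $\bar\sigma\restriction\At=\sigma$ by construction.

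The key step is then the composition law $\bar\tau\circ\bar\sigma=\overline{\tau\circ\sigma}$ for permutations $\sigma,\tau$ of $\At$, together with $\overline{\id}=\id_V$, both proved by $E$-induction. The one delicate case is that a priori $\bar\sigma$ might send a non-atom $x$ to a Quine atom $c=\bar\sigma[x]$; but even then one computes $\bar\tau(\bar\sigma(x))=\bar\tau(c)=\tau(c)$ and $\overline{\tau\circ\sigma}(x)=\{\bar\tau(\bar\sigma(y))\mid y\in x\}=\bar\tau[c]=\{\bar\tau(c)\}=\tau(c)$, using $c=\{c\}$ and that $\tau(c)$ is again a Quine atom, so the two sides agree anyway. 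From the composition law, $\bar\sigma\circ\overline{\sigma^{-1}}=\overline{\id}=\id_V=\overline{\sigma^{-1}}\circ\bar\sigma$, so $\bar\sigma$ is a bijection. It preserves $\in$ in the forward direction directly from the recursion (for non-atoms $x$ immediately, and for atoms $x$ because $y\in x$ forces $y=x$ while $\bar\sigma(x)$ is a Quine atom), and applying this to $\sigma^{-1}$ and substituting $\bar\sigma(x),\bar\sigma(y)$ gives the backward direction; hence $\bar\sigma$ is an automorphism of $\langle V,\in\rangle$, in particular elementary. For uniqueness, note that any automorphism $j$ of $V$ satisfies $j(x)=\{j(y)\mid y\in x\}$ for every $x$ (since $z\in j(x)\iff j^{-1}(z)\in x$), so if $j\restriction\At=\sigma$ then a straightforward $E$-induction gives $j=\bar\sigma$.

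The main obstacle, such as it is, is the careful bookkeeping around the ill-founded atoms: recognizing that $E$ rather than $\in$ is the relation to recurse on, and then checking in the composition-law induction that the apparent possibility of $\bar\sigma$ collapsing a non-atom onto a Quine atom causes no harm — after which the composition law retroactively shows this never actually happens, since $\bar\sigma$ is a bijection carrying $\At$ onto $\At$. In the $\ZFCf$ formulation one should also remember that $\sigma$ and the derived $\bar\sigma$ are definable classes, so the construction is a definition scheme and the automorphism and elementarity conclusions are understood schematically.
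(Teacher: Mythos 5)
Your proof is correct and follows essentially the same route as the paper: part (1) from the definability of the class of Quine atoms, and part (2) by the same recursion along the well-founded set-like relation obtained by deleting the atom self-loops (your $E$ coincides with the paper's $\in\setminus\id_{\At}$ on $\WF(\At)$), with uniqueness established by the same recursion. The only difference is cosmetic: the paper checks that $\ol\sigma$ is an automorphism by induction on the rank levels $\WF_\alpha(\At)$, whereas you deduce bijectivity from the composition law $\ol\tau\circ\ol\sigma=\ol{\tau\circ\sigma}$ and $\ol{\id}=\id_V$; both work.
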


\begin{proof}
The first assertion is clear, as the class of Quine atoms is definable. For the second assertion, suppose that $\sigma$ is a permutation of the class of Quine atoms $\At$. We may extend this to a permutation $\ol\sigma$ of all of $\WF(\At)$ by the following recursion:
\begin{equation}\label{eq:1}
\tag{$*$}
\ol\sigma(x)=\begin{cases}
  \sigma(x)&\text{if $x\in \At$,}\\
  \ol\sigma[x]&\text{otherwise}
\end{cases}
\end{equation}
We may view this directly as a recursion on the set-like well-founded relation $\in\minus\id_\At$, or alternatively, as defining  $\ol\sigma\restrict\WF(\At)_\alpha$ by recursion on the rank $\alpha$. It follows easily by induction that $\ol\sigma\restrict\WF(\At)_\alpha$ is an  $\in$-preserving automorphism of $\WF(\At)_\alpha$, for every ordinal $\alpha$, and so $\ol\sigma\colon V\to V$ is an automorphism of the entire universe $V$.
%$\ol\sigma$ is an $\in$-homomorphism. The same holds for
%$\ol{\sigma^{-1}}$, and moreover $\ol\sigma\circ\ol{\sigma^{-1}}=\ol{\sigma^{-1}}\circ\ol\sigma=\ol{\id_A}=\id_V$
%due to uniqueness of the definition, hence $\ol\sigma$ is an
%automorphism.
Furthermore, any automorphism extending~$\sigma$ must obey~\eqref{eq:1}, and so $\ol\sigma$ is the unique such extension of $\sigma$ to all of $V$.
\end{proof}

In particular, if $\At$ consists of at least two Quine atoms, then we have a permutation swapping two of them (leaving the rest in place), and so we may extend this to an automorphism of the entire universe $\WF(\At)$, which will be definable from the Quine atoms to be swapped. In theorem \ref{Theorem:quineembed}, we shall use the recursion (\ref{eq:1}) even when $\sigma\colon \At\to\At$ is merely injective, yielding an embedding $\ol\sigma\colon \WF(\At)\to\WF(\At)$.

\begin{corollary}\label{Corollary.DefinableAutomorphism}
Work in $\ZFCf$. If the universe is generated by its Quine atoms, $V=\WF(\At)$, and there are at least two Quine atoms, then there is a nontrivial automorphism of~$V$, definable from parameters.
\end{corollary}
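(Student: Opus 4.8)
The plan is to read off the corollary from Theorem~\ref{Theorem.quineauto}. First, using the hypothesis that $\At$ has at least two elements, I would fix two distinct Quine atoms $a,b\in\At$; these serve as the parameters. Next I would let $\sigma\colon\At\to\At$ be the transposition swapping $a$ and $b$ and fixing every other atom, so $\sigma(a)=b$, $\sigma(b)=a$, and $\sigma(x)=x$ for $x\in\At\minus\{a,b\}$. This $\sigma$ is a permutation of $\At$, and the three-case description above makes it explicitly definable from the parameters $a$ and $b$.

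By Theorem~\ref{Theorem.quineauto}(2), $\sigma$ extends uniquely to an automorphism $\ol\sigma\colon V\to V$, obtained via the recursion~\eqref{eq:1}. Since $\ol\sigma(a)=\sigma(a)=b\neq a$, the automorphism $\ol\sigma$ is nontrivial, which is the content we want.

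The one point that needs a word of justification, since we are working in $\ZFCf$ rather than in $\GBCf$ and so have only definable classes at our disposal, is that $\ol\sigma$ can be taken to be definable from $a$ and $b$. But the recursion~\eqref{eq:1} is governed by the class function sending $x$, together with $\ol\sigma$ restricted to the $(\in\minus\id_\At)$-predecessors of $x$, to $\sigma(x)$ if $x\in\At$ and to the pointwise image otherwise, and this class function is definable from $\sigma$, hence from $a$ and $b$. As noted after the definition of $\WF(A)$, the relation $\in\minus\id_\At$ is set-like and, on $\WF(\At)=V$, well-founded, since every infinite $\in$-descending chain eventually reaches an atom, beyond which $\in\minus\id_\At$ offers no further predecessor. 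Hence the recursion theorem for set-like well-founded relations---available in $\ZFf$ with the collection scheme---yields a class $\ol\sigma$ definable from $a$ and $b$ satisfying~\eqref{eq:1}. So there is no genuine obstacle: the corollary is essentially Theorem~\ref{Theorem.quineauto} together with the observation that a witnessing permutation, and therefore its canonical extension, can be chosen definable from two atoms.
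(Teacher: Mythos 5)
Your proposal is correct and matches the paper's own argument: the paper likewise takes the transposition of two Quine atoms, extends it via the recursion~\eqref{eq:1} of Theorem~\ref{Theorem.quineauto}, and notes the resulting automorphism is definable from the two atoms swapped. Your extra remark justifying definability of the recursion along the set-like well-founded relation $\in\minus\id_\At$ in the $\ZFCf$ setting is a reasonable elaboration of a point the paper leaves implicit.
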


If the universe is generated from exactly two atoms, then we don't need any parameters to define the automorphism, since in this case $\sigma$ and hence $\ol\sigma$ are both definable without parameters, since there is only one nontrivial permutation of a two-element set. The theorem generalizes from automorphisms to elementary embeddings as follows.

\begin{theorem}\label{Theorem:quineembed}
Work in $\GBCf$ or~$\ZFCf$, and assume the universe is generated by its Quine atoms, so that $V=\WF(\At)$.
\begin{enumerate}
\item\label{item:4} If $j\colon V\to V$ is a $\Sigma_1$-elementary embedding, then
$j\restrict \At$ is an injection of~$\At$ to~$\At$, and $j=\ol{j\restrict
\At}$ using the notation from~\eqref{eq:1}.
\item\label{item:5} If $\At$ is a set, every $\Sigma_1$-elementary embedding $j\colon
V\to V$ is an automorphism.
\item\label{item:6} (Assuming global choice.) If the class of Quine atoms $\At$ is a proper
class and $\sigma\colon \At\to \At$ is injective, then $j=\ol\sigma\colon V\to V$ is an elementary embedding, that is, $\Sigma_n$-elementary for every particular natural number $n$.
\end{enumerate}
\end{theorem}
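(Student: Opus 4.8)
The plan is to handle the three parts in turn, with parts \ref{item:4} and \ref{item:5} being essentially structural consequences of Theorem \ref{Theorem.NojWFtoWF} and the recursive description \eqref{eq:1}, while part \ref{item:6} is the substantive construction.

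For part \ref{item:4}, I would start from the fact that $\At$ is a definable class, so $\Sigma_1$-elementarity (which preserves true $\Sigma_2$ facts, hence both the $\Sigma_1$ and $\Pi_1$ parts of the definition of being a Quine atom) gives $j[\At]\subseteq\At$; injectivity is immediate since $j$ is injective. To see $j=\ol{j\restrict\At}$, I would argue by rank induction that $j$ agrees with $\ol{j\restrict\At}$ on each $\WF(\At)_\alpha$. The key input is Theorem \ref{Theorem.NojWFtoWF}: since every well-founded set is fixed, and since $j(\WF_\alpha)=\WF_{j(\alpha)}$ with $j$ the identity on ordinals, one obtains $j(\WF_\alpha)=\WF_\alpha$; one then checks the relativized statement $y=\WF(\At)_\alpha$ is also of bounded quantifier complexity in parameters $\alpha$ and $\At$, so $j(\WF(\At)_\alpha)=\WF(\At)_\alpha$ as well. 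For $x\in\WF(\At)_{\alpha+1}\setminus\At$, elementarity gives $z\in j(x)\iff j^{-1}(z)\in x$ for $z$ ranging over $\WF(\At)_\alpha$ (which $j$ maps onto itself), so $j(x)=j[x]=\ol{j\restrict\At}[x]$ using the induction hypothesis, matching \eqref{eq:1}. Part \ref{item:5} then follows quickly: if $\At$ is a set, then $j\restrict\At$ is an injection of a set into itself, hence (being finite, or by a Cantor--Bernstein / Dedekind argument using that $\At$ is a set) a bijection — wait, $\At$ need not be finite, but it is a set, and an injection of a set to itself need not be surjective, so instead I would argue as follows: $j(\At)$, computed inside $V$, satisfies ``$j(\At)$ is the set of all Quine atoms'' by elementarity applied to the parameter-free assertion that $\At$ is the set of all Quine atoms, hence $j(\At)=\At$; since $j\restrict\At$ is an injection of the set $\At$ onto $j(\At)=\At$, it is a permutation of $\At$, and then by Theorem \ref{Theorem.quineauto}(2) its canonical extension $\ol{j\restrict\At}$ is an automorphism, which by part \ref{item:4} equals $j$.

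The heart of the matter is part \ref{item:6}. Given an injection $\sigma\colon\At\to\At$ with $\At$ a proper class, I already know from the remark following Theorem \ref{Theorem.quineauto} (the extension of recursion \eqref{eq:1} to merely injective $\sigma$) that $\ol\sigma\colon V\to V$ is an $\in$-preserving injective map; the issue is to verify it is $\Sigma_n$-elementary for each meta-theoretic $n$. The approach I would take is a back-and-forth / approximation argument: using global choice, fix a class bijection of $\Ord$ with $\At\setminus\ran(\sigma)$, or more simply enumerate enough ``room'' in $\At$, and observe that any injection $\sigma\colon\At\to\At$ can be approximated from the inside by permutations on set-sized pieces together with a coherent tail. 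More precisely, for each set $A_0\subseteq\At$ one can find a permutation $\tau$ of $\At$ agreeing with $\sigma$ on $A_0$ (possible because $\At\setminus\ran(\sigma)$ is a proper class, so there is enough space to complete $\sigma\restrict A_0$ to a bijection of all of $\At$); then $\ol\tau$ is an automorphism by Theorem \ref{Theorem.quineauto}, hence fully elementary (elementarity of an automorphism being a single formula, or a trivial schema), and $\ol\tau$ agrees with $\ol\sigma$ on $\WF(A_0)$. Since every set lies in $\WF(A_0)$ for some set $A_0\subseteq\At$, and since for any particular formula $\phi(x_1,\dots,x_k)$ and parameters $a_1,\dots,a_k$ we can choose $A_0$ large enough that all the $a_i$ and all relevant witnesses up to the quantifier depth of $\phi$ lie in $\WF(A_0)$ — using that $\WF(A_0)$ is a model of $\ZFCf$ and reflects appropriately, or more carefully using that $\WF(A_0)\prec V$ for $\Sigma_n$ formulas once $A_0$ is closed under the relevant Skolem functions — we conclude $V\satisfies\phi(\bar a)\iff V\satisfies\phi(\ol\sigma(\bar a))$ by transferring through $\ol\tau$.

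The main obstacle I anticipate is the $\Sigma_n$-elementarity transfer in part \ref{item:6}: one must be careful that $\WF(A_0)$ genuinely reflects the relevant $\Sigma_n$ statements of $V$, since $\WF(A_0)$ is not an elementary submodel of $V$ in general (it is merely a model of $\ZFCf$). The fix is to choose $A_0$ not arbitrarily but as a set of atoms large enough that $\WF(A_0)$ is closed under Skolem functions for $\Sigma_n$ formulas with the given parameters — equivalently, to run a Löwenheim--Skolem-style argument inside $V$ (legitimate in $\GBCf$ with global choice, or in $\ZFGCf$, which supplies the needed class well-ordering to define Skolem functions), producing a set $A_0\subseteq\At$ with $\WF(A_0)\prec_{\Sigma_n}V$ containing the parameters. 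Since both $\ol\sigma$ and the auxiliary automorphism $\ol\tau$ restrict to the same map on $\WF(A_0)$, and $\ol\tau\colon V\to V$ is fully elementary with $\ol\tau[\WF(A_0)]=\WF(\tau[A_0])\prec_{\Sigma_n}V$ as well, the equivalence $V\satisfies\phi(\bar a)\iff V\satisfies\phi(\ol\sigma(\bar a))$ follows by composing the three $\Sigma_n$-elementary links $\WF(A_0)\prec_{\Sigma_n}V$, $\ol\tau\restrict\WF(A_0)$, and $\WF(\tau[A_0])\prec_{\Sigma_n}V$. As $n$ and $\phi$ were arbitrary, $\ol\sigma$ is elementary in the schematic sense, completing the proof.
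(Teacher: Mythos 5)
Your overall architecture is sound and parts (\ref{item:5}) matches the paper, but two of your key justifications would fail as written. In part (\ref{item:4}), your derivation of $j(x)=j[x]$ rests on the claim that $j$ maps $\WF(\At)_\alpha$ onto itself, which is what licenses writing $j^{-1}(z)$ for an arbitrary $z\in j(x)$. That claim is false: the embeddings constructed in part (\ref{item:6}) already violate it at level $0$, since there $j[\At]=\sigma[\At]\ofneq\At$ (and when $\At$ is a proper class, $j(\WF(\At)_\alpha)$ is not even meaningful). The ingredient you actually need is the other conclusion of theorem~\ref{Theorem.NojWFtoWF}, that the range $j[V]$ is \emph{transitive}: then every $z\in j(x)$ lies in $j[V]$, so $z=j(y)$ for some $y$, and $\Delta_0$-elementarity gives $y\in x$, whence $j(x)=j[x]$ and $j$ obeys~\eqref{eq:1} with no rank induction at all. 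This is exactly the paper's argument.

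In part (\ref{item:6}), your first paragraph already contains a complete argument, and your ``main obstacle'' paragraph then breaks it. Once you have a permutation $\tau$ of $\At$ with $\tau\restrict A_0=\sigma\restrict A_0$, the automorphism $\ol\tau$ preserves every first-order formula (as a scheme) and satisfies $\ol\tau(\bar a)=\ol\sigma(\bar a)$ for $\bar a\in\WF(A_0)$, so $V\satisfies\phi(\bar a)\iff V\satisfies\phi(\ol\tau(\bar a))$ gives the conclusion directly; no reflection of $V$ into $\WF(A_0)$ is involved. The ``fix'' you propose is in fact impossible: there is \emph{no} set $A_0\of\At$ with $\WF(A_0)\prec_{\Sigma_1}V$, because the $\Sigma_1$ assertion ``there is a set of Quine atoms properly containing $A_0$'' is true in $V$ but false in $\WF(A_0)$, whose Quine atoms are exactly the members of $A_0$, and $A_0$ itself is an available parameter. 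Two smaller repairs: $\sigma\restrict A_0$ extends to a permutation of $\At$ not because $\At\minus\ran(\sigma)$ is a proper class (it may well be empty, e.g.\ if $\sigma$ is a bijection) but because $\At\minus A_0$ and $\At\minus\sigma[A_0]$ are both proper classes and hence equinumerous by global choice. With these corrections your route is a legitimate alternative to the paper's, which instead proves $\WF(\sigma[\At])\prec_{\Sigma_n}\WF(\At)$ by the Tarski--Vaught test, using an automorphism that fixes the parameters and moves the witness into $\WF(\sigma[\At])$; your version instead matches $\sigma$ on the parameters and transfers truth through a single global automorphism, which is if anything more direct.
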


\begin{proof}
(\ref{item:4}) The first assertion is clear as $j(a)\in j(a)$ for any
$a\in \At$. Also, $j(x)\fo j[x]$. On the other hand, since $j[V]$ is
transitive by theorem~\ref{Theorem.NojWFtoWF}, every element of~$j(x)$
is of the form~$j(y)$, where necessarily~$y\in x$, and so $j(x)=j[x]$, which implies that $j$ obeys~\eqref{eq:1}.

(\ref{item:5}) The property of being the set of all atoms is~$\Pi_1$, and so $j(\At)=\At$. But since the range of $j$ is transitive, it follows that $j[\At]=\At$ and hence $j\restrict \At$ is a permutation, which implies that $j=\ol{j\restrict \At}$ is an automorphism.

(\ref{item:6}) This statement is a theorem scheme, a separate statement for each meta-theoretic natural number $n$. Assume that $\At$ is a proper class, with an injection $\sigma\colon \At\to \At$. Let $j=\ol\sigma\colon V\to V$ be the corresponding embedding arising from $\sigma$ via~\eqref{eq:1}, and let  $A=\sigma[\At]$, which is a proper class subclass of $\At$. Since inductively $j$ is an isomorphism of $\WF_\alpha(\At)$ with $\WF_\alpha(A)$, for every ordinal $\alpha$, it follows that $j$ is an isomorphism of~$\WF(\At)$ with~$\WF(A)$. In order to see that $j$ is $\Sigma_n$-elementary from $V$ to $V$, therefore, it therefore suffices to show that $\WF(A)\prec_{\Sigma_n}\WF(\At)$. For this, we verify the Tarski--Vaught criterion: assume that a $\Sigma_n$ statement $\phi(u,x)$ holds in $V$ with~$u\in\WF(A)$ and~$x\in V$, we have to
find $v\in\WF(A)$ such that $\phi(u,v)$. Fix sets $a\of A$ and $a\of
b\of \At$ such that $u\in\WF(a)$ and~$x\in\WF(b)$. Using global choice,
we can find an injection of~$b$ into~$A$ identical on~$a$, which we
can extend to a permutation~$\tau\colon \At\to \At$. Since $\ol\tau$ is an
automorphism identical on~$\WF(a)$, we have $\phi(u,\ol\tau(x))$,
where $\ol\tau(x)\in\WF(\tau[b])\of\WF(A)$.
\end{proof}
% EJ: Note that point (3) fails in general w/o global choice. It is
% consistent with ZFC^- + V=WF(A) that A is a proper class, but it
% includes a subset A_0 of cardinality \aleph_1 such that every subset
% of A - A_0 is countable. In this case, an injection s: A -> A
% extends to an elementary embedding only if A_0 - s(A_0) is
% countable. This is in fact a generic case, one can classify all
% elementary embeddings in ZFC^- + V=WF(A) along such lines, but I
% don't think it's worth the trouble.

We remark that statement~(\ref{item:5}) applies more generally: if $V=\WF(t)$ for some set~$t$, then every $\Sigma_1$-elementary embedding $j\colon V\to V$ is an automorphism: since $V=\WF(t)$ is a $\Pi_1$ property of~$t$, asserting that every infinite $\in$-descending sequence contains an element of $t$, it follows that $V=\WF(j(t))$, which implies $V=j[V]$ as $j[V]$ is full.

\begin{corollary}\label{distinct embedding and automorphism}
Work in $\ZFCf$ and assume that the universe is generated from a proper class of Quine atoms. Then there is an embedding $j\colon V\to V$, definable from parameters and $\Sigma_n$-elementary for every particular $n$, which is not an automorphism.
\end{corollary}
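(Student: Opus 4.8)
The plan is to apply Theorem~\ref{Theorem:quineembed}: I would produce a definable-from-parameters injection $\sigma\colon\At\to\At$ that is not surjective, set $j=\ol\sigma$, and verify its properties. (If global choice is available one could simply invoke Theorem~\ref{Theorem:quineembed}(\ref{item:6}) with any non-surjective $\sigma$; the point here is to work in $\ZFCf$.) The first step is to secure an infinite set of atoms to use as a parameter. Although $\At$ is a proper class, it has an infinite subset: since a proper class has an $n$-element subset for every natural number $n$, applying the collection scheme over $\omega$ to the formula ``$y$ is an $n$-element subset of $\At$'' yields a set $b$ whose union meets $\At$ in a set containing arbitrarily large finite sets, hence an infinite subset of $\At$. (This is exactly where the collection scheme, rather than mere replacement, is needed.) Then, by the axiom of choice, fix an injective sequence $s=\<a_n\mid n<\omega>$ of distinct atoms, and define $\sigma(a_n)=a_{n+1}$ for all $n<\omega$ and $\sigma(a)=a$ for every atom $a$ not among the $a_n$. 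This $\sigma$ is injective, definable from the single parameter $s$, and has range $A:=\At\minus\{a_0\}$, a proper subclass of $\At$.

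Next I would set $j=\ol\sigma\colon V\to V$, the embedding obtained from $\sigma$ by the recursion~\eqref{eq:1}, exactly as in the proof of Theorem~\ref{Theorem:quineembed}(\ref{item:6}) (this is legitimate since $\sigma$ is injective). There $\ol\sigma$ is shown to restrict to an isomorphism of $\WF_\alpha(\At)$ with $\WF_\alpha(A)$ for each ordinal $\alpha$, so that $j$ is an isomorphism of $V=\WF(\At)$ onto $\WF(A)$; and it is visibly definable from $s$. It is not an automorphism: if it were, Theorem~\ref{Theorem.quineauto}(1) would force $j\restrict\At$ to be a permutation of $\At$, whereas $j\restrict\At=\sigma$ omits $a_0$ from its range.

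Finally I would check that $j$ is $\Sigma_n$-elementary for each particular meta-theoretic $n$ --- a theorem scheme, as in~(\ref{item:6}). Through the isomorphism $j\colon V\cong\WF(A)$ it suffices to show $\WF(A)\elesub_{\Sigma_n}\WF(\At)$ for every $n$, and for this I would rerun the Tarski--Vaught argument from the proof of Theorem~\ref{Theorem:quineembed}(\ref{item:6}) essentially verbatim. The only appeal to global choice there is the step that, given sets $a\of A$ and $a\of b\of\At$, builds an injection of $b$ into $A$ fixing $a$ pointwise and extends it to a permutation of $\At$. With $A=\At\minus\{a_0\}$ this collapses to a triviality: if $a_0\notin b$ use the identity, and if $a_0\in b$ choose any atom $a^*\in\At\minus b$ --- available because $\At$ is a proper class and $b$ is a set --- and use the transposition of $a_0$ with $a^*$ together with the identity on all other atoms; the resulting automorphism $\ol\tau$ fixes $\WF(a)$ pointwise and carries $\WF(b)$ into $\WF(A)$. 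Thus no global choice enters, and the whole argument takes place in $\ZFCf$.

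The main obstacle is not a computation but the bookkeeping of this last localization: one must confirm that the proof of Theorem~\ref{Theorem:quineembed}(\ref{item:6}) uses global choice only in order to manufacture the auxiliary permutation, and that replacing the general range $\sigma[\At]$ by the co-singleton $\At\minus\{a_0\}$ reduces that manufacture to the bare fact that a proper class minus a set is nonempty. A secondary point, already handled above, is the existence of the parameter $s$, which rests on the collection scheme and fails in models of the weaker theory with replacement in place of collection.
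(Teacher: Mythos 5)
Your proposal is correct and follows essentially the same route as the paper's own proof: obtain a countable set of atoms via collection and choice, use the shift $\sigma(a_n)=a_{n+1}$ to get an isomorphism of $V$ with $\WF(\At\minus\{a_0\})$, and observe that the only use of global choice in the Tarski--Vaught step of theorem~\ref{Theorem:quineembed}(\ref{item:6}) collapses to a transposition of $a_0$ with an atom outside $b$. The extra detail you give on extracting the infinite set of atoms from the collection scheme is a correct elaboration of what the paper states more briefly.
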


\begin{proof}
The result follows from theorem \ref{Theorem:quineembed} statement \ref{item:6}, if we have global choice. But let us prove it without global choice, using only the usual first-order axiom of choice. Since $\At$ is a proper class, we can prove the existence of at least $n$~atoms for every~$n\in\omega$ by induction on~$n$. Using collection and choice, we can thus find an infinite countable set $\{a_n:n\in\omega\}\of \At$.
% EJ: This is actually the best possible, since it is consistent with $\ZFCf+V=\WF(A)$ that $A$ is a proper class with no uncountable subset.
Define $\sigma\colon \At\to \At$ by $\sigma(a_n)=a_{n+1}$ and $\sigma(a)=a$ otherwise. This injection extends as in theorem \ref{Theorem:quineembed} to an isomorphism $\ol\sigma$ of $\WF(\At)$ with $\WF(A)$, where $A=\At\minus\{a_0\}$. In the proof of the elementarity of~$\WF(A)$ in $\WF(\At)$ in theorem~\ref{Theorem:quineembed}, we can assume $b=a\cup\{a_0\}$, hence we can define~$\tau$ as a transposition of~$a_0$ with any atom in~$\At\minus b$ without using any choice.

Thus, the embedding $j$ we produce arises via \eqref{eq:1} from an injection on $\At$, and our argument shows as a theorem scheme that any embedding arising this way is $\Sigma_n$-elementary for any meta-theoretic natural number $n$.
\end{proof}

As we mentioned after corollary \ref{Corollary.DefinableAutomorphism}, in $\ZFCf+V=\WF(\At)$, where $\At$ has precisely two Quine atoms, the unique nontrivial automorphism from theorem~\ref{Theorem.quineauto} is definable without parameters, as it is induced by swapping the two atoms. On the other hand, if $\Card\At\ne2$, there is no nontrivial parameter-free definable $\Sigma_1$-elementary embedding, because any parameter-free definable class is preserved by all automorphisms, but the only function $\At\to \At$ that commutes with every permutation is the identity.

Let us now move on to Boffa's theory, which turns out to be rich in elementary embeddings and automorphisms. We shall start with some general remarks on basic consequences of \BAFA. First, the axiom can be equivalently formulated in the following more convenient form: if $\<a,e>$ is an
extensional binary relation which is an end-extension of $\<a_0,e_0>$,
and $f_0$ is an isomorphism from~$\<a_0,e_0>$ to~$\<t_0,\in>$ with
transitive~$t_0$, there exists an isomorphism
$f\colon\<a,e>\to\<t,\in>$ extending~$f_0$ with transitive~$t$.

Using global choice, \BAFA\ implies a generalized Mostowski collapse lemma, asserting that every set-like extensional relation $E$ on a class $A$ is isomorphic to the $\in$ relation on some transitive class  $T$. (The usual Mostowski collapse applies only to well-founded relations.)  This can be shown by writing $A$ as the union of an increasing chain $\{a_\alpha:\alpha\in\Ord\}$ of $E$-transitive subsets $a_\alpha\of
A$, and constructing an increasing chain of isomorphisms
$f_\alpha\colon\<a_\alpha,E\restrict a_\alpha>\to\<t_\alpha,\in>$ with
transitive~$t_\alpha$. Its union is then an isomorphism of~$\<A,E>$
to~$T=\bigcup_\alpha t_\alpha$. This has immediate consequences for
the construction of elementary embeddings of~$V$ into transitive
classes~$M$. For example, every ultrapower $\<V^I/U,\in_U>$ of the universe by an ultrafilter $U$ on a set $I$ gives rise to the corresponding set-like extensional relation $\in_U$, which therefore is realized as a transitive class $\<M,\in>\cong\<V^I/U,\in_U>$ via the generalized Mostowski collapse, and so the ultrapower map provides an elementary embedding $j\colon V\to M$ into a transitive class $M$, even when $U$ is not countably complete, which of course does not happen in \ZFC. This feature is essential for the development of nonstandard analysis in the framework of~\cite{BH92}. However, one
cannot construct a nontrivial elementary embedding into~$V$ itself in this way.

An important special case of \BAFA\ is that every isomorphism of transitive sets can
be extended to an isomorphism whose domain and range contains any given set. This means that the class
of all isomorphisms of transitive sets forms a \emph{back-and-forth system} from~$V$
to~$V$ (see \cite{Mar02}), and consequently \BAFA\ proves (as a scheme) that every isomorphism of transitive sets preserves the truth of any particular formula. With global choice, one can carry out the full back-and-forth construction:

\begin{theorem}\label{Theorem:BAFAauto}
$\ZFGCf+\BAFA$ proves that every isomorphism of transitive sets can be extended to an
automorphism of the universe. In particular, there exist nontrivial automorphisms.
\end{theorem}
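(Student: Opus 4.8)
The plan is to carry out a standard back-and-forth construction along a class well-ordering of $V$, using \BAFA\ to handle the extension steps and global choice to guide the construction coherently. First I would set up the bookkeeping: since we work in $\ZFGCf$, fix the global choice bijection $C\colon\Ord\to V$, and let $\pi_0\colon\<s_0,\in>\to\<t_0,\in>$ be the given isomorphism of transitive sets that we wish to extend. I would then define, by recursion on ordinals $\alpha$, an increasing chain of isomorphisms of transitive sets $\pi_\alpha\colon\<s_\alpha,\in>\to\<t_\alpha,\in>$, each extending $\pi_0$, such that $\bigcup_\alpha s_\alpha = V = \bigcup_\alpha t_\alpha$; the union $j=\bigcup_\alpha\pi_\alpha$ will then be the desired automorphism.

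For the successor step, suppose $\pi_\alpha$ is given. Using the reformulation of \BAFA\ noted just before the theorem (namely: any isomorphism $f_0$ of transitive sets extends to an isomorphism $f\colon\<a,e>\to\<t,\in>$ with $t$ transitive, whenever $\<a,e>$ is an extensional end-extension of the domain of $f_0$), I would first apply the ``forth'' half: let $x_\alpha = C(2\alpha)$ be the $\alpha$-th set we want into the domain; form the extensional end-extension of $\<s_\alpha,\in>$ obtained by adjoining $x_\alpha$ together with $\TC(\{x_\alpha\})$ (this is extensional and end-extends $\<s_\alpha,\in>$ since $s_\alpha$ is transitive), and apply \BAFA\ to extend $\pi_\alpha$ to an isomorphism onto a transitive set $t'$. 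This gives an intermediate isomorphism with $x_\alpha$ in its domain. Then do the symmetric ``back'' half to force $C(2\alpha{+}1)$ into the range, by applying \BAFA\ to the inverse isomorphism. Composing, we obtain $\pi_{\alpha+1}$ extending $\pi_\alpha$ with $C(2\alpha),C(2\alpha{+}1)$ in its domain and range respectively. At limit stages $\lambda$, set $\pi_\lambda=\bigcup_{\alpha<\lambda}\pi_\alpha$; this is an isomorphism of the transitive union $s_\lambda=\bigcup_{\alpha<\lambda}s_\alpha$ onto $t_\lambda=\bigcup_{\alpha<\lambda}t_\alpha$, again extending $\pi_0$.

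Since every set equals $C(\beta)$ for some $\beta$, and $C(\beta)$ enters the domain by stage $\beta+1$ (and similarly for the range), we get $\bigcup_\alpha s_\alpha=V=\bigcup_\alpha t_\alpha$, so $j=\bigcup_\alpha\pi_\alpha$ is a bijection $V\to V$; being a union of $\in$-isomorphisms it preserves and reflects membership, hence is an automorphism, and it extends $\pi_0$. For the final sentence of the theorem, note that the identity on any transitive set containing at least one ill-founded set --- and \BAFA\ produces many such, e.g. Quine atoms --- is not the only automorphism available: applying the construction to an isomorphism of two distinct transitive sets, such as the canonical pictures of two distinct Quine atoms $\{a\}$ and $\{b\}$ swapping $a$ and $b$, yields a nontrivial automorphism, establishing the ``in particular'' clause.

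The main obstacle I anticipate is purely bookkeeping-theoretic rather than conceptual: one must be careful that the recursion genuinely takes place in $\ZFGCf$ --- that is, that the class function $\alpha\mapsto\pi_\alpha$ is legitimately definable --- and here is exactly where the global choice bijection $C$ earns its keep, since without it the choice of which transitive set to collapse to at each \BAFA\ application (the target set $t$ is not unique, by the homogeneity phenomenon discussed earlier) could not be made uniformly along a proper-class recursion. One also wants to check that the extensionality hypothesis of \BAFA\ is genuinely met at each step, but adjoining a transitive closure to a transitive set keeps the relation extensional, so this is routine. Everything else --- the limit-stage amalgamation, the verification that the union preserves $\in$ in both directions --- is standard back-and-forth reasoning.
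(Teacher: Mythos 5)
Your proposal is correct and follows essentially the same route as the paper: a transfinite back-and-forth construction along a global-choice enumeration of $V$, using the reformulated \BAFA\ at each successor step to pull the next set into the domain and (via the inverse isomorphism) into the range, taking unions at limits, and obtaining nontriviality by starting from an isomorphism swapping two distinct Quine atoms. The paper's proof is merely a terser version of the same argument, citing the previously noted fact that the isomorphisms of transitive sets form a back-and-forth system from $V$ to $V$.
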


\begin{proof}
Let $f_0$ be any isomorphism of transitive sets. Using global choice and the back-and-forth property we just mentioned above, we may construct an increasing chain $\{f_\alpha:\alpha\in\Ord\}$ of isomorphisms of transitive sets such that every set eventually belongs to $\dom(f_\alpha)\cap\ran(f_\alpha)$ as $\alpha$ becomes large. The union $j=\bigcup_{\alpha\in\Ord}f_\alpha$ is therefore an automorphism of~$V$
extending~$f_0$. If $f_0$ itself is nontrivial, such as an isomorphism of one Quine atom with another, we thereby ensure that $j$ is nontrivial.
\end{proof}

\begin{remark}\label{rem:orbit}\rm
It is not difficult to prove in \BAFA\ that if $a$ is any set and $b\notin\WF(\TC(a))$,
then there is a proper class of sets isomorphic to~$b$ by an isomorphism fixing~$a$. In view of
theorem~\ref{Theorem:BAFAauto}, this means that the orbit of~$b$ under automorphisms of~$V$ fixing~$a$ is a proper
class, suggesting a rich Galois theory here.
\end{remark}

Turning from automorphisms to general elementary embeddings, we have the following criterion.
\begin{theorem}\label{Theorem:BAFAembcrit}
Work in $\ZFGCf+\BAFA$ or~$\GBCf+\BAFA$. The following are equivalent
for any class~$M$.
\begin{enumerate}
\item\label{item:7}
$M=j[V]$ for some elementary embedding $j\colon V\to V$.
\item\label{item:8}
$M=j[V]$ for some $\Sigma_1$-elementary embedding $j\colon V\to V$.
\item\label{item:9}
$M$ is transitive and isomorphic to~$V$.
\item\label{item:10}
$M$ is a full transitive model of \BAFA.
\end{enumerate}
\end{theorem}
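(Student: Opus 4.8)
The plan is to prove the cycle of implications $(\ref{item:7})\Rightarrow(\ref{item:8})\Rightarrow(\ref{item:9})\Rightarrow(\ref{item:10})\Rightarrow(\ref{item:7})$. The first implication is trivial, since every elementary embedding is in particular $\Sigma_1$-elementary. For $(\ref{item:8})\Rightarrow(\ref{item:9})$, suppose $M=j[V]$ for a $\Sigma_1$-elementary $j\colon V\to V$. By theorem~\ref{Theorem.NojWFtoWF}, the range of any such embedding is a transitive full class, so $M$ is transitive; and $j$ itself, being an injective homomorphism of $\<V,{\in}>$ onto $\<M,{\in}>$ that reflects $\in$ (which follows from $\Sigma_1$-elementarity, since $x\in y$ and $x\notin y$ are both $\Sigma_1$-expressible via their $\Delta_0$ definitions), is an isomorphism of $V$ with $M$. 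Hence $M$ is transitive and isomorphic to~$V$.

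For $(\ref{item:9})\Rightarrow(\ref{item:10})$: if $M$ is transitive and isomorphic to $V$, then since $V$ is a full class (trivially, as $V$ contains every set), fullness transfers along the isomorphism — more carefully, an isomorphism $\pi\colon V\to M$ with $M$ transitive must be the identity on $\WF$ by theorem~\ref{Theorem.NojWFtoWF} reasoning, and for any $X\of M$ we have $\pi^{-1}[X]$ is a set $u$, so $\pi(u)=\pi[u]=X\in M$; thus $M$ is full and transitive, hence a model of $\ZFCf$ (and, expanding with the induced global well-order, of $\ZFGCf$ or $\GBCf$) by Rieger's theorem. Finally $M\cong V\models\BAFA$, and since \BAFA\ over this base is expressible as a scheme of first-order statements that are preserved under isomorphism, $M\models\BAFA$ as well. (One should be slightly careful that \BAFA\ is genuinely first-order expressible as a scheme over $\ZFGCf$, which is exactly the pragmatic stance adopted in section~\ref{Section.MetaMathematicalIssues}; the uniqueness/superuniversality assertion quantifies over all extensional binary relations end-extending a transitive set, which is a legitimate first-order quantifier.)

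The main work, and the expected obstacle, is $(\ref{item:10})\Rightarrow(\ref{item:7})$: given a full transitive model $M$ of \BAFA, produce an elementary embedding $j\colon V\to V$ with $j[V]=M$. The strategy is a back-and-forth construction in the spirit of theorem~\ref{Theorem:BAFAauto}, but now threading between $V$ and the inner model $M$ rather than from $V$ to itself. The key structural fact to establish is that \emph{the isomorphisms of transitive subsets of $V$ with transitive subsets of $M$ form a back-and-forth system from $V$ to $M$}: the forth direction uses \BAFA\ in the ambient universe $V$ to extend a given isomorphism $f_0\colon\<t_0,\in>\to\<s_0,\in>$ (with $t_0\of V$, $s_0\of M$ transitive) so that its domain contains any prescribed set, realizing the required extensional picture as a transitive set in $V$; the back direction uses \BAFA\ holding inside $M$ to extend so that the range contains any prescribed element of $M$. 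Using global choice we then build an increasing $\Ord$-chain $\<f_\alpha:\alpha\in\Ord>$ of such isomorphisms with $\dom(f_\alpha)\uparrow V$ and $\ran(f_\alpha)\uparrow M$, and set $j=\bigcup_\alpha f_\alpha$. Then $j\colon V\to M$ is an isomorphism, so in particular $j[V]=M$; and $j$ is elementary — as a scheme — because $j\restrict V_\alpha$ agrees with an isomorphism of transitive sets for cofinally many $\alpha$, and by the back-and-forth property already noted in the text, isomorphisms of transitive sets preserve the truth of any particular formula. The delicate point is matching up the bookkeeping so that both the domain exhausts $V$ and the range exhausts $M$ while keeping each $f_\alpha$ an isomorphism onto a transitive piece of the \emph{correct} model at each side, and verifying that the two halves of the back-and-forth extension property really do follow, respectively, from \BAFA\ in $V$ and \BAFA\ in $M$ — here one must check that "end-extension of a transitive set by an extensional relation" is absolute enough between $V$ and $M$, which is where the fullness and transitivity of $M$ are used.
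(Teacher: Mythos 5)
Your overall architecture matches the paper's: the cycle of implications is the same, $(\ref{item:8})\Rightarrow(\ref{item:9})$ via theorem~\ref{Theorem.NojWFtoWF}, $(\ref{item:9})\Rightarrow(\ref{item:10})$ via preservation of formulas under isomorphism, and $(\ref{item:10})\Rightarrow(\ref{item:7})$ by showing that the class $I$ of isomorphisms $f\colon t\to s$ of transitive sets with $s\of M$ is a back-and-forth system from $V$ to $M$ and then threading it with global choice. The gap is in the one place you yourself flag as delicate: your assignment of which half of the back-and-forth property uses which instance of \BAFA\ is reversed, and the half that actually requires a new idea is left unargued. Your ``forth direction from \BAFA\ in $V$'' does not work: given $f\colon t\to s$ with $s\of M$ and a transitive $t'\fo t$, applying \BAFA\ in $V$ to the end-extension $\<t',\in>\fo\<t,\in>$ produces an isomorphism onto \emph{some} transitive set $s'\fo s$ in $V$, with no reason whatsoever for $s'\of M$; so the extended map need not lie in $I$. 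This is not an ``absoluteness'' issue to be checked but the crux of the implication. The paper's forth argument is: using \AC, realize $\<t',\in>$ as an abstract extensional structure $\<a,e>$ end-extending $\<s,\in>$ with the new points $a\minus s$ taken to be ordinals, so that $\<a,e>\in M$ by fullness and transitivity of $M$; then apply $M\models\BAFA$ to get $h\colon\<a,e>\to\<s',\in>$ extending $\id_s$ with $s'\in M$, and compose. Conversely, the back direction is the easy half and uses only \BAFA\ in $V$ (not in $M$, as you claim): given $m\in M$, the set $\TC(\{m\})\cup s$ is already a transitive subset of $M$ end-extending $s$, and one extends $f^{-1}$ along it by \BAFA\ in $V$. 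As you state it, the back direction cannot be carried out ``inside $M$'' at all, since the domain side of $f$ need not be a subset of $M$.

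Two smaller remarks. First, statement~$(\ref{item:7})$ is not directly formalizable even as a scheme; the paper is explicit that what is meant is that $(\ref{item:10})$ yields a \emph{particular definable} class $j$ (from $M$ and the global choice function) that is provably $\Sigma_n$-elementary for each metatheoretic $n$, and your proof should say this. Second, your closing argument for elementarity of $j=\bigcup_\alpha f_\alpha$ --- that $j$ agrees on any given set with some $f_\alpha\in I$, and every isomorphism of transitive sets preserves each particular formula by the back-and-forth property of \emph{all} such isomorphisms from $V$ to $V$ --- is correct and is an acceptable alternative to the paper's route (which instead shows $M\prec V$ as a scheme by producing isomorphisms $V\to M$ fixing any prescribed $a\in M$); just avoid phrasing it in terms of $j\restrict V_\alpha$, since the $V_\alpha$ hierarchy does not exhaust $V$ without foundation.
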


\begin{proof}
The formalization of statement \ref{item:7} is problematic, since we cannot express it directly even as a scheme. What we mean by including it here is that, first, it contains statement \ref{item:8} as an immediate special case; and second, if statement \ref{item:10} holds, then using $M$ and the global choice function we may define a particular class embedding $j\colon V\to V$ for which $M=j[V]$ and prove that any class satisfying this definition is $\Sigma_n$-elementary for any particular natural number $n$. The implication
$(\ref{item:7})\to(\ref{item:8})$ is thereby clear, and $(\ref{item:9})\to(\ref{item:10})$ also, since isomorphisms are truth-preserving for any given assertion. The implication $(\ref{item:8})\to(\ref{item:9})$ follows from theorem~\ref{Theorem.NojWFtoWF}.

For the remaining implication $(\ref{item:10})\to(\ref{item:7})$, let $I$ be the class of all
isomorphisms $f\colon t\to s$ of transitive sets, with~$s\of M$. We claim that $I$ is a
back-and-forth system from~$V$ to~$M$. The back direction follows
directly from \BAFA; for the forth direction, let $f\in I$ be as
above, and $t'\fo t$ be a transitive set. Using the axiom of choice, we
can construct an isomorphism $g\colon\<t',\in>\to\<a,e>$
extending~$f$, where $\<a,e>$ is an extensional structure
end-extending $\<s,\in>$ such that $a\minus s\of\Ord$. Since $M$ is
full, $\<a,e>\in M$, hence there is an isomorphism
$h\colon\<a,e>\to\<s',\in>$ extending $\id_s$ for some transitive set
$s'\in M$ using $M\models\BAFA$. Then $f'=h\circ g\in I$ extends $f$ to~$t'$.

Using transfinite recursion as in the proof of
theorem~\ref{Theorem:BAFAauto}, we can construct an isomorphism
$j\colon V\simeq M$, which will be definable from $M$ and the global choice function. Moreover, if $a\in M$, then $\id_{\TC(\{a\})}\in I$
can be extended in the same way to an isomorphism of~$V$ to~$M$
fixing~$a$. This implies (as a scheme) that $M$ and $V$ must agree on the truth of any particular formula having parameters in $M$, and so $j\colon V\to V$ is an elementary embedding, in the sense that it is $\Sigma_n$-elementary for any particular $n$.
\end{proof}

\begin{theorem}\label{Theorem:BAFAemb}
 In the theory $\ZFGCf+\BAFA$, there is a definable class elementary embedding $j\colon V\to V$, which is not an automorphism.
\end{theorem}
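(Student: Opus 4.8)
The plan is to apply Theorem~\ref{Theorem:BAFAembcrit}: in order to obtain a definable elementary embedding $j\colon V\to V$ that is not an automorphism, it suffices to exhibit a definable class~$M$ that is a full transitive model of~$\BAFA$ and is a \emph{proper} subclass of~$V$. Indeed, the embedding with $j[V]=M$ supplied by the implication~(\ref{item:10})$\to$(\ref{item:7}) of that theorem is definable from~$M$ and the global choice function (hence definable), is $\Sigma_n$-elementary for every particular~$n$, and fails to be surjective, so it cannot be an automorphism.

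To build such an~$M$, recall that $\BAFA$ implies the existence of a proper class of Quine atoms, and use the global choice function~$C$ to fix one definable Quine atom~$a_0$; for instance, let $a_0=C(\alpha_0)$ where $\alpha_0$ is least with $C(\alpha_0)\in\At$. Define
$$M=\set{x\mid a_0\notin\TC(\set x)},$$
the class of all sets that do not hereditarily involve~$a_0$. Then $M$ is transitive, since $\TC(\set y)\of\TC(\set x)$ whenever $y\in x$, and $M$ is full, since if $S\of M$ is a set then $S\neq a_0$ (as $a_0\notin S$), hence $a_0\notin\TC(\set S)$, so $S\in M$. By Rieger's theorem $M$ is therefore a model of~$\ZFCf$. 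Finally $\TC(\set{a_0})=\set{a_0}$, so $a_0\notin M$, and $M$ is a proper subclass of~$V$.

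The substantive step is to verify $M\models\BAFA$. Suppose we are given, in the sense of~$M$, a transitive set~$t_0$ and an extensional binary relation~$\<a,e>$ end-extending~$\<t_0,\in>$, with $t_0,a,e\in M$; we must find a transitive set~$t\in M$ and an isomorphism $\<a,e>\to\<t,\in>$ that is the identity on~$t_0$. The key point is that, for any transitive set~$t$ and isomorphism $f\colon\<a,e>\to\<t,\in>$, an element $y\in a$ can satisfy $f(y)=a_0$ only if $y$ is a \emph{local Quine atom} of~$\<a,e>$, meaning that $y$ is the only $e$-predecessor of~$y$: for $f(y)=a_0=\set{a_0}$ forces the $f$-image of the set of $e$-predecessors of~$y$ to be~$\set{f(y)}$, whence by injectivity that set is exactly~$\set y$. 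Let $Q$ be the set of local Quine atoms of~$\<a,e>$ lying outside~$t_0$, and, using that $\At$ is a proper class, choose distinct Quine atoms $b_x\neq a_0$ for $x\in Q$, none of which lies in~$t_0$. Because $\<a,e>$ end-extends~$\<t_0,\in>$, each $x\in Q$ has $e$-extension~$\set x$ and has no $e$-edge to or from any element of~$t_0$; hence $\<a,e>$ end-extends the transitive structure $\<t_0\cup Q,\,e\restriction(t_0\cup Q)>$, which is the disjoint union of~$\<t_0,\in>$ with a family of loops and is isomorphic, via $\id_{t_0}$ together with $x\mapsto b_x$, to the transitive set $t_0\cup\set{b_x\mid x\in Q}$, which lies in~$M$. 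Applying $\BAFA$ in~$V$ to this enlarged base produces a transitive set~$t$ and an isomorphism $f\colon\<a,e>\to\<t,\in>$ extending~$\id_{t_0}$ and $x\mapsto b_x$. Since every local Quine atom of~$\<a,e>$ lies in~$t_0$ or in~$Q$, no element of~$a$ is sent to~$a_0$ by~$f$, and $t\neq a_0$ as well; as $t$ is transitive this yields $a_0\notin\TC(\set t)$, so $t\in M$, and likewise $f\in M$. Thus $M\models\BAFA$, and Theorem~\ref{Theorem:BAFAembcrit} completes the proof. The main obstacle, as just indicated, is the control of local Quine atoms: a direct appeal to~$\BAFA$ in~$V$ could realize them using~$a_0$ and thereby push the resulting transitive set out of~$M$, and the remedy is to pin the local Quine atoms down to harmless fresh atoms by enlarging the base structure before applying~$\BAFA$.
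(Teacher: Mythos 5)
Your proof is correct, but it takes a genuinely different route through theorem~\ref{Theorem:BAFAembcrit} than the paper does. The paper forms the abstract class relation consisting of a disjoint copy of $\<V,\in>$ together with one additional reflexive point, applies the generalized Mostowski collapse (itself a consequence of $\BAFA$ plus global choice established earlier in that section) to realize it as a transitive class $T$, and observes that the image $M=T\minus\set{F(a)}$ of the copy of $V$ is a proper transitive class isomorphic to $V$; that is, it verifies clause~(\ref{item:9}) of the criterion, and the desired embedding is essentially the collapse isomorphism itself. You instead verify clause~(\ref{item:10}): you exhibit a concrete definable proper class $M$, namely the sets whose transitive closures avoid a fixed definable Quine atom $a_0$, check that it is transitive and full, and then prove directly that $M\models\BAFA$. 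The key technical point in your argument---that an isomorphism onto a transitive set can send a point of $\<a,e>$ to $a_0$ only if that point is a local Quine atom, so that enlarging the base $t_0$ by fresh atoms $b_x$ pins all such points away from $a_0$ before $\BAFA$ is invoked in $V$---is exactly right, and it is what makes the relativization go through. Both arguments rely on theorem~\ref{Theorem:BAFAembcrit} for elementarity and both use global choice; the paper's version is shorter because the generalized Mostowski collapse does the heavy lifting, while yours buys an explicit description of the image class and a self-contained proof, of some independent interest, that $\BAFA$ passes down to the hereditarily $a_0$-free sets. One small point deserves an explicit line: the witnessing isomorphism $f$ must itself belong to $M$, which you assert but do not justify---it follows immediately from fullness, since $f$ is a set of (Kuratowski) pairs of elements of the transitive class $M$ and hence a subset of $M$.
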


\begin{proof}
This is technically a theorem scheme, since we assert that the particular class embedding $j\colon V\to V$ that we define is $\Sigma_n$-elementary for every particular natural number $n$. The class $j$ will be defined relative to the global choice function that is available in $\ZFGCf$. Let $A=(V\times\{0\})\cup\{a\}$, $a=\<1,1>$ and $E=\{\<\<x,0>,\<y,0>>:x\in y\}\cup\{\<a,a>\}$; thus, $\<A,E>$ is
a disjoint union of an isomorphic copy of~$V$ and an additional Quine atom. By the generalized Mostowski collapse, there is an isomorphism $F\colon\<A,E>\to\<T,\in>$ to a transitive class~$T$. Let $j$ be the composition of~$F$
with the natural inclusion of~$V$ in~$A$. Then $j$ is an isomorphism of~$V$ to a transitive
class~$M=T\minus\{F(a)\}\subsetneq V$, and $M$ is an elementary substructure of~$V$ by
theorem~\ref{Theorem:BAFAembcrit}, hence $j\colon V\to V$ is an elementary embedding which is not an automorphism.
\end{proof}

Although the nontrivial automorphisms and elementary embeddings arising in theorems
\ref{Theorem:BAFAauto} and~\ref{Theorem:BAFAemb} were definable, these definitions made essential use of the global choice function, in order to carry out the transfinite back-and-forth construction. We now show that one cannot define such embeddings in the first-order language of set theory alone:

\begin{theorem}
$\ZFCf+\BAFA$ proves that there is no nontrivial $\Delta_0$-elementary embedding $j\colon V\to V$ of the universe with itself that is definable (with set parameters) in the first-order language of set theory.
\end{theorem}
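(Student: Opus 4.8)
The plan is to play the extreme homogeneity of a \BAFA-universe against the rigidity that definability imposes. Suppose toward a contradiction that $j\colon V\to V$ is a nontrivial $\Delta_0$-elementary embedding defined by a formula $\varphi(x,y,p)$ with a set parameter~$p$ (finitely many parameters being coded into one). Since the relations ``$\in$'' and ``$\ne$'' are $\Delta_0$, the map $j$ is injective and both preserves and reflects membership, so $j$ is an isomorphism of $\<V,{\in}>$ onto the (proper) class $M=\ran j$; in particular $M$ is elementarily equivalent to~$V$.

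The main tool is the back-and-forth truth-preservation scheme that $\BAFA$ proves (the scheme underlying theorem~\ref{Theorem:BAFAembcrit}): every isomorphism $g\colon\<t,{\in}>\to\<t',{\in}>$ of transitive sets preserves the truth in $V$ of any fixed formula. Applying this to $\varphi$: whenever $g$ is such an isomorphism with $p\in t$ and $g(p)=p$, then for $x\in t$ with $j(x)\in t$ we get $j(g(x))=g(j(x))$; that is, any isomorphism of transitive sets fixing~$p$ carries $j$ to itself where it can. Two consequences, both provable in $\ZFCf+\BAFA$ without global choice by combining this with remark~\ref{rem:orbit} and the rigidity of $\WF(\cdot)$ over transitive sets: first, for every set~$x$ the image $j(x)$ is rigid over the transitive set $\TC(\{p\}\cup\{x\})$, hence $j(x)\in\WF(\TC(\{p\}\cup\{x\}))$ --- otherwise a proper class of non-rigid copies of $j(x)$ over $\TC(\{p\}\cup\{x\})$ would, by transport of $\varphi$, each have to equal $j(x)$; second, by the same argument together with injectivity of~$j$, if $x\notin\WF(\TC(\{p\}))$ then $j(x)\notin\WF(\TC(\{p\}))$.

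With these in hand I would first dispose of the case in which $j$ moves some Quine atom $a\notin\TC(\{p\})$: then $b=j(a)$ is again a Quine atom (being a Quine atom is $\Delta_0$) and $b\ne a$. Fixing a fresh Quine atom $c\notin\TC(\{p\})\union\{a,b\}$ (available since $\BAFA$ gives a proper class of Quine atoms), transporting $\varphi$ along the evident isomorphism of $\TC(\{p\})\union\{a,b\}$ with $\TC(\{p\})\union\{a,c\}$ that fixes $\TC(\{p\})\union\{a\}$ and sends $b\mapsto c$ --- or, if $b\in\TC(\{p\})$, along the one fixing $\TC(\{p\})$ and sending $a\mapsto c$ --- yields $j(a)=c\ne b$ in the first case and $j(c)=b=j(a)$ with $c\ne a$ in the second, either way a contradiction. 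It remains to treat the case in which $j$ fixes every Quine atom outside $\TC(\{p\})$; here the two consequences above force $j$ to be the identity on $\WF(\TC(\{p\}))$ (a nontrivial move of least rank over $\TC(\{p\})$ would, via transport, either violate injectivity of~$j$ or be pinned by rigidity), and since under $\BAFA$ the universe is \emph{not} generated by its Quine atoms there must then be a nontriviality witness $a\notin\WF(\TC(\{p\}))$; by the second consequence $b=j(a)\notin\WF(\TC(\{p\}))$ as well, and one runs the fresh-copy construction over $\TC(\{p\})$ as before. The point I expect to be the main obstacle is precisely this last reduction: one must locate a witness that is mobile over $\TC(\{p\})$ while its $j$-image stays pinned, so that transport produces a genuine contradiction rather than a mere constraint. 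I would carry it out by choosing the witness of minimal rank over $\TC(\{p\})$ and exploiting that $\BAFA$ surrounds any such set with a proper class of pairwise isomorphic copies over $\TC(\{p\})$, so that no formula $\varphi(x,y,p)$ could single out its image.
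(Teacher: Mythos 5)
Your first half is exactly the paper's opening move: using remark~\ref{rem:orbit} together with the \BAFA-scheme that isomorphisms of transitive sets preserve any fixed formula, you correctly obtain $j(x)\in\WF(u\cup\TC(\{x\}))$ for a transitive set $u$ containing the parameters. But the second half has a genuine gap, and it sits precisely where you predicted. Two problems. First, the intermediate claim that $j$ must be the identity on $\WF(\TC(\{p\}))$ is not established by your methods: every isomorphism of transitive sets fixing $\TC(\{p\})$ pointwise also fixes $\WF(\TC(\{p\}))$ pointwise (by the rigidity you invoke), so transport of $\varphi$ yields \emph{no} information about $j$ on that class --- it cannot, for instance, rule out that $j$ permutes two Quine atoms lying inside $\TC(\{p\})$. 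Second and more seriously, the final ``fresh-copy construction'' does not produce a contradiction for a general witness $a$ with $b=j(a)\ne a$: your own first consequence guarantees $b\in\WF(\TC(\{p\})\cup\TC(\{a\}))$, so $b$ is always pinned by isomorphisms fixing $p$ and $a$, and if in addition $a\in\WF(\TC(\{p\})\cup\TC(\{b\}))$ (e.g.\ if $a\in\TC(\{b\})$), then no isomorphism fixing $\TC(\{p\})$ can move one of $a,b$ while fixing the other. Transport then gives only the equivariance constraint $j(g(a))=g(j(a))$, which is consistent with a nontrivial $j$ and is exactly the ``mere constraint'' you feared. The phrase ``witness of minimal rank over $\TC(\{p\})$'' does not repair this, since the relevant witnesses lie outside $\WF(\TC(\{p\}))$ and have no such rank.

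The paper closes this gap with a specific gadget rather than a minimality argument: given any $x$, Boffa's axiom supplies a proper class of sets $y$ with $y=\{y,x\}$. Then $j(y)=\{j(y),j(x)\}$ is self-membered, and a self-membered element of $\WF(A)$ must lie in $A$ itself (its least level in the hierarchy over $A$ is forced to be $0$); combined with $j(y)\in\WF(u\cup\TC(\{x\})\cup\{y\})$ this puts $j(y)$ inside the \emph{set} $u\cup\TC(\{x\})\cup\{y\}$. Since $j$ is injective, $j^{-1}[u\cup\TC(\{x\})]$ is a set, so among the proper class of solutions $y$ one can choose $y$ with $j(y)\notin u\cup\TC(\{x\})$, forcing $j(y)=y$ and hence $j(x)=x$. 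This kills the embedding everywhere at once, with no case split on Quine atoms and no need to control $j$ on $\WF(\TC(\{p\}))$ first. You had all the ingredients except this reflexive gadget, which is the one idea the argument cannot do without.
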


\begin{proof}
This is a theorem scheme, a separate statement for each possible definition, asserting that it does not define a nontrivial $\Delta_0$-elementary embedding of the universe $V$ to itself. Assume that such an embedding $j$ is definable using parameters from a transitive set~$u$. Since set isomorphisms preserve any particular formula, remark~\ref{rem:orbit} implies that $j(x)\in\WF(u\cup\TC(x))$ for every~$x$. Given any set~$x$, let
$y$ be a set such that $y=\{y,x\}$ using Boffa's
axiom. Then $j(y)=\{j(y),j(x)\}$, and
$j(y)\in\WF(u\cup\TC(y))=\WF(u\cup\TC(\{x\})\cup\{y\})$. Since $j(y)\in
j(y)$, the least~$\alpha$ such that
$j(y)\in\WF_\alpha(u\cup\TC(\{x\})\cup\{y\})$ must be~$0$, so $j(y)\in
u\cup\TC(\{x\})\cup\{y\}$. Since $j^{-1}[u\cup\TC(\{x\})]$ is a set, but there is a proper class of solutions to $y=\{y,x\}$, we may assume that we choose $y$ so that
$j(y)\notin u\cup\TC(\{x\})$. But in this case $j(y)=y$ and consequently $j(x)=x$.
\end{proof}

Much of the material we have presented in this section on \BAFA\ has been already known; for more information on automorphisms and elementary embeddings in $\ZFGCf+\BAFA$, we refer the reader to  \cite{Paj99,Jer01}.

\section{Automorphisms and elementary embeddings of the universe}

In the previous section, we produced models exhibiting several patterns of possibility for the existence of nontrivial automorphisms and nontrivial elementary embeddings of the universe. Specifically, we have a model with nontrivial automorphisms, but no other nontrivial elementary embeddings (corollary \ref{Corollary.DefinableAutomorphism} and theorem \ref{Theorem:quineembed} statement \ref{item:5}); we have a model with nontrivial automorphisms and other nontrivial elementary embeddings (theorem \ref{Theorem:quineembed} statement \ref{item:6}); and we have the models of \ZFC, which have no nontrivial automorphisms and no nontrivial elementary embeddings (by the Kunen inconsistency itself). We should like now to round out these possibilities with the missing case, namely, a model of set theory having nontrivial elementary embeddings, but no nontrivial automorphisms.

\goodbreak

\begin{theorem}\label{Emil's Construction}
If\/ \ZFC\ is consistent, there is a model of $\GBf+\AC$ having a class $j\colon V\to V$ that is an elementary embedding of the universe to itself, but in which no class is a nontrivial automorphism of the universe.
\end{theorem}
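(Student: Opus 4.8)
The plan is to build the model directly over an arbitrary model $\langle W,\in\rangle$ of $\ZFC$ (available since $\ZFC$ is consistent), turning it into a non-well-founded universe $V$ with a bespoke ill-founded part and a deliberately impoverished second-order part. The design is forced by the Kunen phenomenon: by theorem~\ref{Theorem.NojWFtoWF} and the remarks following it, any nontrivial $\Sigma_1$-elementary $j\colon V\to V$---and in particular any nontrivial automorphism---fixes $\WF$ pointwise, has transitive full range, and hence satisfies $j(x)=j[x]$ for every $x$, so that $j$ carries each set to an \emph{isomorphic} one. Thus a nontrivial $j$ requires the existence of distinct but isomorphic ill-founded sets, whereas the crudest presence of such a pair---two ill-founded sets with disjoint transitive closures and no cross-membership---immediately produces a nontrivial automorphism, obtained by transposing the pair and propagating the transposition up the cumulative hierarchy by the $\in$-recursion analogous to~\eqref{eq:1}, which is a class even in $\GBf$ by the recursion theorem for set-like well-founded relations. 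The whole point of the construction is therefore to arrange the ill-founded sets into a proper-class \emph{scaffold} $S$ that is globally rigid---with no order-theoretic slack an automorphism could exploit---yet is isomorphic, by a single canonical and choice-free map, to a proper part of itself, in the way $\langle\Ord,<\rangle$ is rigid but admits the non-surjective shift.

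Concretely, I would use Rieger's theorem: starting from $W$, replace the membership relation on a copy of $W$ by an extensional, set-like, full relation $E$ that agrees with $\in$ on the well-founded part but on the ill-founded part realizes the scaffold $S$, with its pieces linked one-directionally rather than as interchangeable parallel copies. By Rieger's theorem the resulting $\langle V,\in\rangle$ models $\ZFCf$, and $\AC$ is inherited from $W$. The one-directional isomorphism of $S$ onto a proper part of itself, extended upward by the recursion~\eqref{eq:1}, yields a \emph{definable} class map $j\colon V\to V$ that is injective and not onto; since this recursion involves no arbitrary choices, each restriction of $j$ to a set is an isomorphism onto its image, and globally $j$ is an isomorphism of $V$ onto a proper transitive subclass $M\subsetneq V$. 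Threading a back-and-forth argument between $V$ and $M$ exactly as in the proof of theorem~\ref{Theorem:quineembed}(\ref{item:6})---the one-directional structure of $S$ supplying the ``forth'' moves---gives that $M\prec V$ in the sense of being $\Sigma_n$-elementary for every meta-theoretic $n$, so that $j\colon V\to V$ is an elementary embedding. Finally, take the classes of the model to be those first-order definable, with set parameters, over the structure $\langle V,\in,j\rangle$ (which one arranges to be amenable, so that collection holds in the expanded language); this collection is closed under comprehension and the \Godel\ operations, so $\langle V,\in,\text{those classes}\rangle\models\GBf+\AC$ with $j$ among the classes. Crucially, no form of global choice is present.

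It remains to verify that no class of the model is a nontrivial automorphism. By theorem~\ref{Theorem.NojWFtoWF} any such automorphism $\pi$ fixes $\WF$ and restricts to an $\in$-preserving bijection of $S$ carrying each piece to an isomorphic one; because $S$ is globally rigid---its pieces being locked together by the one-directional linking into a configuration with no nontrivial self-isomorphism, just as a well-order has none---this forces $\pi$ to be the identity on $S$, and then the uniqueness of the propagating recursion forces $\pi=\id$ on all of $V$. I expect the genuinely delicate step, and the main obstacle, to be the design of $S$ making all of this simultaneously true: one must choose $S$ so that the shift $j$ really is definable (so that $\langle V,\in,j\rangle$ is a legitimate structure and $j$ a legitimate class) while adjoining $j$ does not let one define any nontrivial automorphism---say by recovering from $j$ a ``name'' for two isomorphic pieces of $S$ that could then be transposed. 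This is exactly where the absence of global choice does the work: producing a nontrivial automorphism would require selecting isomorphisms coherently among proper-class-many mutually indistinguishable pieces of $S$, a selection available neither from $\AC$ alone nor by first-order definition over $\langle V,\in,j\rangle$, whereas the single canonical shift $j$ requires no selection whatsoever; showing that the scaffold can be engineered to separate these two demands is the crux of the proof.
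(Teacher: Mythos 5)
Your two design requirements on the scaffold $S$ pull in opposite directions, and the proposal does not resolve the tension---it defers precisely the step where the construction must succeed or fail, while committing to a proof method that cannot work under your own hypotheses. To kill automorphisms you want $S$, and hence $V$, genuinely rigid; but your proof of elementarity is ``back-and-forth exactly as in theorem~\ref{Theorem:quineembed}(\ref{item:6})'', and that argument verifies the Tarski--Vaught criterion by producing, for each $u\in M$ and $x\in V$, a nontrivial \emph{automorphism} of $V$ fixing $u$ and moving $x$ into $M$. A rigid universe has no such automorphisms, so the cited method is unavailable, and no substitute is offered. The natural rigid candidates also genuinely fail elementarity: if $S$ is a well-ordered one-directional chain (your $\langle\Ord,<\rangle$ analogy), its bottom element is definable, hence must be fixed by any elementary $j$, yet the shift moves it---the same reason $\Ord\setminus\{0\}$ is not an elementary substructure of $\langle\Ord,<\rangle$ even though it is an isomorphic rigid proper part. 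Making the universe look the same from a point and from its shift is a homogeneity demand that works against rigidity. (Your motivational claim that two unrelated isomorphic ill-founded sets ``immediately'' yield an automorphism of $V$ is also too quick outside $V=\WF(\At)$ or $\BAFA$ with global choice, but that is not load-bearing.)

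The paper resolves the tension in the opposite way: the first-order universe is \emph{not} rigid. It is built inside a model of $\GBCf+\BAFA$ from a proper class of Quine atoms carrying a definable \emph{dense} linear order with no least element, so that there is an abundance of partial isomorphisms $\ol f^k$; these drive both the elementarity of the shift embedding and the closure of the class collection under first-order definability. Automorphisms are eliminated not by rigidity of the structure but by shrinking the second-order part: the classes of the model are only the $k$-invariant ones, and a $k$-invariant automorphism must preserve each well-ordered layer $A_n$ for $n\ge k$ and hence be the identity, while the shift $j$ is itself $1$-invariant and preserves all $0$-invariant classes. If you wish to pursue your route, the missing content is a demonstration that a rigid ill-founded universe can carry a nontrivial elementary self-embedding at all, together with a proof of elementarity that does not pass through automorphisms; neither is supplied, and the latter is exactly where the obstruction lies.
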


\begin{proof}
Let us explain more precisely what we mean. Assuming \ZFC\ is consistent, we shall construct a model of $\GBf+\AC$ that has a class $j$ that is a $\Sigma_n$-elementary embedding $j\colon V\to V$ of the universe to itself, for every meta-theoretic natural number $n$, but no class in the model is a nontrivial automorphism of the universe. Actually, we may make a more uniform claim, producing a model of $\GBCf+\BAFA$, for which there is a formula~$\Phi(X)$ such that the model satisfies the single statement ``the collection of classes satisfying~$\Phi$ is closed under the class-formation axioms of~$\GBf$, and there is a non-identical function $j\colon V\to V$ preserving all classes satisfying~$\Phi$''.

If $\ZFC$ is consistent, then there is a model of~$\GBCf+\BAFA$, and we shall work inside that model. Therefore, assume~$\GBCf+\BAFA$ and let $\ol\Ord=\Ord\cup\{\infty\}$, where
$\alpha<\infty$ for all~$\alpha\in\Ord$. Fix a bijection between a
proper class~$A$ of Quine atoms, and the class of all
sequences~$a\colon\omega\to\ol\Ord$ such that $a(m)=\infty$ for all
but finitely many~$m$. We will identify $a\in A$ with the
corresponding sequence in notation.
Let $<$ denote the lexicographic order on~$A$, namely, $$a<b\iff\exists n\in\omega\,(a\restrict n=b\restrict n\land a(n)<b(n)),$$
and let $A_n=\{a\in A:\forall m\ge n\ a(m)=\infty\}$. Notice that
$\<A,<>$ is a dense linear order with largest element $\infty^*$ (the
constant~$\infty$ sequence) and no least element, whereas
$\<A_n,<>$ is a well-order isomorphic to the lexicographic order
on~$\ol\Ord^n$.

Let us also fix a set $r_{a,b}$ such that $r_{a,b}=\<r_{a,b},a,b>$ for
every~$a,b\in A$ such that~$a<b$. Note that the sets~$r_{a,b}$ are
pairwise distinct, and they are not in~$\WF(A)$. For any subclass~$B\of A$,
denote
$$\WF^<(B)=\WF(B\cup\{r_{a,b}:a,b\in B,a<b\}).$$
Let $M_n=\WF^<(A_n)$ and $M=\bigcup_{n\in\omega}M_n$. The first-order part of our desired model will be~$\<M,\in>$. The purpose of adding the sets~$r_{a,b}$ is to make $<$ definable in~$M$, since $a<b\iff M\models a\ne b\land\exists x\,x=\<x,a,b>$.

The classes of our model will be the subclasses of~$M$ that are invariant under certain partial isomorphisms, which we now describe. Let $\pw^M(A)$ be the class of all subsets of~$A$ that belong to~$M$, that is, subsets of~$A_n$ for some~$n\in\omega$.
Let $I$ denote the class of all order-preserving isomorphisms $f\colon
u\to v$ where $u,v\in\pw^M(A)$ and $\infty^*\in u,v$.

\begin{sublemma}\label{lemma:bfs}
Let $f\in I$, $f\colon u\to v$, $v\of A_n$. For every $u\of
u'\in\pw^M(A)$, there exists~$f'\in I$ such that $f\of f'$ and
$f'\colon u'\to v'$ where $v'\of A_{n+1}$.
\end{sublemma}

\begin{proof}
Since $u'$ is well-ordered, we can find an increasing
enumeration $u'\minus u=\{a'_\alpha:\alpha<\gamma\}$. For each
$\alpha<\gamma$, let $a_\alpha$ be the smallest element of~$u$ larger
than~$a'_\alpha$ (note that $a'_\alpha<\infty^*\in u$, so $a_\alpha$ exists).
By assumption, $(f(a_\alpha))(m)=\infty$ for
all~$m\ge n$; let $f'(a'_\alpha)$ be the sequence in~$A_{n+1}$ which
differs from~$f(a_\alpha)$ only in the $n$th coordinate, where we put~$(f'(a'_\alpha))(n)=\alpha$. Then
$f'$ has the required properties.
\end{proof}

Every order-isomorphism $f\colon u\to v$ from~$I$ uniquely extends to an
$\in$-isomorphism $\ol f\colon\WF^<(u)\to\WF^<(v)$, since $\ol
f(r_{a,b})=r_{f(a),f(b)}$ is the unique set in~$M$ satisfying
$x=\<x,f(a),f(b)>$, and then we use well-founded recursion on~$\in$ to make
$\ol f(x)=\ol f[x]$. More generally, for any~$k\in\omega$, let
$s_k\colon A\to A$ denote the shift operator $(s_k(a))(n)=a(n+k)$, and
let $\ol f^k$ be the unique isomorphism
$\WF^<(s_k^{-1}[u])\to\WF^<(s_k^{-1}[v])$ such that for any $a\in
s_k^{-1}[u]$, $\ol f^k(a)\restrict k=a\restrict k$ and~$s_k(\ol
f^k(a))=f(s_k(a))$. That is, we leave the first $k$~elements of~$a$
unchanged and apply~$f$ to the tail of the sequence. If $k\in\omega$
and $u\in\pw^M(A)$, we say that a relation $R\of M^r$ is
\emph{$k$-invariant}, if $\ol f^k$ preserves~$R$ for
every~$f\in I$, meaning that
$$R(x_1,\dots,x_r)\iff R(\ol f^k(x_1),\dots,\ol f^k(x_r))$$
for all $x_1,\dots,x_r\in\dom(\ol f^k)$.

Observe that $R$ is $k$-invariant as an $r$-ary
relation if and only if it is as a (unary) class of $r$-tuples, because $\ol f^k$ is
an $\in$-isomorphism. If $R$ is $k$-invariant,
it is also $k'$-invariant for any~$k'>k$.

Our model will be $\calM=\<M,\calX,\in>$, where $\calX$ is the
collection of all subclasses of~$M$ that are $k$-invariant for
some~$k\in\omega$.
Formally, $\calX$ is not an object of any kind in our model; rather, we
are defining a (parametric) interpretation of $\GBf$ in~$\GBCf+\BAFA$, delimiting classes of the interpreted theory by a
formula of the background theory.

\begin{sublemma}\label{lemma:closure}\
\begin{enumerate}
\item\label{item:11}
For every $k\in\omega$, the collection of
$k$-invariant relations is closed under first-order
definability and contains~$\in$.
\item\label{item:12}
$\calM\models\GBf+\AC$.
\item\label{item:13}
$\calM\models{}$``every automorphism of~$\<V,\in>$ is the identity''.
\end{enumerate}
\end{sublemma}

\begin{proof}
(\ref{item:11}) Closure under Boolean operations is trivial. Let
$S(x)\iff\exists y\,R(x,y)$, so that $S=\dom(R)$, where $R$ is
$k$-invariant, and fix $f\in I$, $f\colon
u\to v$, and~$x\in\WF^<(s_k^{-1}[u])$. If $S(x)$, fix a~$y\in M$
such that~$R(x,y)$, and $u'\in\pw^M(A)$ such that
$y\in\WF^<(u')$. By lemma~\ref{lemma:bfs}, there is $g\fo f$ in~$I$
such that $\dom(g)\fo s_k[u']$. Then $y\in\dom(\ol g^k)$, hence
$R(\ol g^k(x),\ol g^k(y))$, which implies~$S(\ol g^k(x))$, where $\ol
g^k(x)=\ol f^k(x)$. The other
direction is symmetric.

(\ref{item:12}) Being a full transitive class, each~$\WF^<(u)$ is a
model of~$\ZFCf$, hence $\calM$ is a model of $\ZFCf$ without
collection. In~$\calM$, any set is a class as every $x\in\WF^<(u)$
with~$u\of A_n$ is $n$-invariant, and $\calM$ satisfies the class formation axioms of~$\GBf$
by~(\ref{item:11}), it remains to show that it satisfies collection.
Assume that $\forall x\in z\,\exists y\,R(x,y)$, where $R\of M^2$ is
$k$-invariant, and~$z\in M_k$. If
$x\in z$ and~$y\in M$ satisfy~$R(x,y)$, fix $u\in\pw^M(A)$ such that
$y\in\WF^<(u)$. By lemma~\ref{lemma:bfs}, there exists
$f\in I$
such that $\dom(f)\fo s_k[u]$ and~$\ran(f)\of
A_{k+1}$. Then $\ol f^k(x)=x$ as $f(\infty^*)=\infty^*$, $\ol f^k(y)\in M_{k+1}$, and $R(x,\ol
f^k(y))$. Thus $\forall x\in z\,\exists y\in M_{k+1}\,R(x,y)$. Using
collection in the background theory, there is a subset $w\of
M_{k+1}$ such that $\forall x\in z\,\exists y\in w\,R(x,y)$; we have
$w\in M_{k+1}\of M$ as $M_{k+1}$ is full.

(\ref{item:13}) Let $j\colon M\to M$ be a $k$-invariant automorphism.
Since $A$ and~$<$ are definable in~$M$, it follows that $j$
restricts to an automorphism of~$\<A,<>$. Fix $n\ge k$; we claim that
$j[A_n]=A_n$. Assume for contradiction that $j(a)=b$ where $a\in
A_n\not\ni b$ (the other case is symmetric). Then $s_n(a)=\infty^*\ne
s_n(b)$. Let $f$ be an order-preserving function with
domain~$\{\infty^*,s_n(b)\}$ such that $f(s_n(b))\ne
s_n(b)$. Then $\ol f^n(a)=a$ and $\ol f^n(b)\ne b$, contradicting the
$n$-invariance of~$j$. Thus, $j$ restricts to an automorphism of $\<A_n,<>$. Since this is a well order, it follows that $j\restrict A_n=\id$, and this implies $j\restrict M_n=\id$. Since $n\ge k$ was arbitrary, the entire automorphism is trivial $j=\id$, as desired.
\end{proof}

Let $\sigma\colon A\to A$ be the function such that
$(\sigma(a))(0)=1+a(0)$ and~$s_1(\sigma(a))=s_1(a)$, so that
$\sigma(\<a_0,a_1,a_2,\dots>)=\<1+a_0,a_1,a_2,\dots>$. Since $\sigma$
is an order-preserving bijection from~$A$ to
$$A'=\{a\in A:a(0)\ne0\},$$
it has a unique extension to an isomorphism $j\colon M\to\WF^<(A')$, namely $j$ is the directed union of the
isomorphisms $\ol f\colon\WF^<(u)\to\WF^<(\sigma[u])$, where $\infty^*\in u\in\pw^M(A)$ and $f=\sigma\restrict u$.

\begin{sublemma}\label{lemma:preserve}
Both $\sigma$ and~$j$ are $1$-invariant and consequently belong to~$\calX$. The embedding $j$ preserves all
$0$-invariant classes, and in particular,
$j\colon M\to M$ is a nontrivial elementary embedding.
\end{sublemma}

\begin{proof}
The $1$-invariance of~$\sigma$ is clear from the definition, as
\begin{align*}
\sigma(\ol f^1(\<a_0,a_1,a_2,\dots>))
&=\sigma(\<a_0,f(\<a_1,a_2,\dots>)>)\\
&=\<1+a_0,f(\<a_1,a_2,\dots>)>\\
&=\ol f^1(\<1+a_0,a_1,a_2,\dots>)\\
&=\ol f^1(\sigma(\<a_0,a_1,a_2,\dots>)),
\end{align*}
and $j$ is definable in $\<M,\in,\sigma>$, hence it is also
$1$-invariant by lemma~\ref{lemma:closure}.

The preservation of $0$-invariant relations
follows from the fact that if $x\in\WF^<(u)$ with~$\infty^*\in u$,
then $f:=\sigma\restrict u\in I$, and $\ol f^0(x)=j(x)$. This implies any instance of elementarity, since the relation
defined by any formula without parameters is $0$-invariant by lemma~\ref{lemma:closure} and hence preserved by~$j$.
\end{proof}

Theorem~\ref{Emil's Construction} now follows from lemmas \ref{lemma:closure} and~\ref{lemma:preserve}, and the proof is complete.
\end{proof}

Extending the idea of the previous argument, notice that we may extend
$\sigma_k(\<a_0,a_1,\dots>)=\<a_0,\dots,a_{k-1},1+a_k,a_{k+1},\dots>$
to a nontrivial $(k+1)$-invariant embedding $j_k\colon M\to M$ which preserves all
$k$-invariant classes. In particular, we may form the structure $\<M,\in,j_0,j_1,j_2,\dots>$, which is a model of $\ZFCf(j_0,j_1,\dots)$ +
``there is no nontrivial automorphism of~$\<V,\in>$'' + ``$j_k$ is a
nontrivial elementary self-embedding of $\<V,\in,j_0,\dots,j_{k-1}>$'' for
every~$k$.

% As I explained in the email comments, I think \Eem(V) is a strictly informal idea, since we are not
% able to express "j is elementary" in these theories. So I downgraded this corollary from a
% formal-seeming statement to a statement in the article text, following.  -JDH
%
%\begin{corollary} \label{independency of morphisms} The following are consistent relative to~$\ZFC$:
%\begin{enumerate}
%\item $\ZFCf+\lbrace \id_{V}\rbrace=\Aut(V)=\Eem(V)$.
%\item $\ZFCf+\lbrace \id_{V}\rbrace\ofneq \Aut(V)=\Eem(V)$.
%\item $\ZFCf+\lbrace \id_{V}\rbrace=\Aut(V)\ofneq \Eem(V)$.
%\item $\ZFCf+\lbrace \id_{V}\rbrace\ofneq \Aut(V)\ofneq \Eem(V)$.
%\end{enumerate}
%\end{corollary}

Let us introduce some notation to help summarize what we've done. Let $\Aut(V)$ denote the collection of automorphisms of~$V$ and, informally, let $\Eem(V)$ the collection of elementary embeddings of~$V$. This latter notation is informal, because in light of the issues mentioned in section \ref{Section.MetaMathematicalIssues}, we are not actually able to express the property ``$j\colon V\to V$ is an elementary embedding'' as a single assertion about the class $j$, even in the full second-order language of \Godel--Bernays set theory. We are able to express that a class $j\colon V\to V$ is $\Sigma_n$-elementary for any particular natural number $n$ in the meta-theory, and in this way we can say of any specific class $j$ that ``$j\colon V\to V$ is elementary'' as an infinite scheme of statements, and this scheme-theoretic treatment of elementarity suffices for many applications. Meanwhile, in contrast, the property ``$j\colon V\to V$ is an automorphism'' \emph{is} a first-order expressible property of $j$, and one can prove that any such automorphism is $\Sigma_n$-elementary for any particular natural number $n$ in the meta-theory.

We have produced models of $\ZFCf$ realizing all four separating refinements of the fact that $\lbrace \id_{V}\rbrace\of\Aut(V)\of\Eem(V)$.
\begin{enumerate}
\item $\lbrace \id_{V}\rbrace=\Aut(V)=\Eem(V)$. Models of \ZFC\ have no nontrivial automorphisms or elementary self-embeddings of universe, and indeed no nontrivial $\Sigma_1$-elementary self-embeddings.
\item $\lbrace \id_{V}\rbrace\ofneq \Aut(V)=\Eem(V)$. If $V=\WF(A)$ for a set $A$ of at least two Quine atoms, then there are nontrivial automorphisms, but no other nontrivial elementary embeddings.
\item $\lbrace \id_{V}\rbrace=\Aut(V)\ofneq \Eem(V)$. The model of theorem \ref{Emil's Construction} has no nontrivial automorphisms, but does have a nontrivial elementary embedding.
\item $\lbrace \id_{V}\rbrace\ofneq \Aut(V)\ofneq \Eem(V)$. If $V=\WF(A)$ for a proper class of Quine atoms, then there are nontrivial automorphisms, as well as non-automorphic nontrivial elementary embeddings.
\end{enumerate}

In the case of statement (2), the model $V=\WF(A)$ for a set $A$ of Quine atoms, we have $\Aut(V)=\Aut(A)$, the permutation group of the set $A$. In fact, let us now show that every group can arise this way.

\begin{theorem}\label{Theorem.PrescribedGroupOfAutomorphisms}
Assume $\GBCf+\BAFA$. Then for every group $G$, there is a transitive set $A_G$ whose automorphism group is isomorphic to $G$, and the automorphism group of the corresponding cumulative universe $\WF(A_G)$ generated over this set is also isomorphic to $G$, in the sense that every automorphism of $A_G$ extends to a unique automorphism of $\WF(A_G)$ and every automorphism of $\WF(A_G)$ arises this way.
%Assuming $\ZFC$ and that every transitive set can be end-extended to a model of $\ZFC$, every group $G$ is isomorphic to $\Aut_D(M)$ for some model $M$ of $\ZFCf + \exists x\, V=\WF(x)$. (Without the extra assumption, we can get any group that can be represented as a set living inside a model of ZFC.)
\end{theorem}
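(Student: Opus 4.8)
The plan is to split the theorem into two parts: (1) constructing, inside a model of $\GBCf+\BAFA$, a transitive set $A_G$ with $\Aut(\<A_G,\in>)\iso G$ and with the extra feature that \emph{every element of $A_G$ is $\in$-cyclic}, i.e.\ $a\in\TC(a)$ for each $a\in A_G$; and (2) transferring automorphisms back and forth between $\<A_G,\in>$ and $\WF(A_G)$ exactly as in theorem~\ref{Theorem.quineauto}(2), using the $\in$-cyclicity feature to recover $A_G$ inside $\WF(A_G)$ by a first-order formula.

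For part (1), I would start from the classical fact (Frucht, with de Bruijn for the infinite case) that every group $G$ is the automorphism group of a simple graph $\Gamma=\<X,{\sim}>$, which may moreover be taken connected. I then encode $\Gamma$ as a set structure $\<F,E>$ with a single binary relation by putting an object $a_v$ for each vertex $v$ and a distinct object $e_{uv}=e_{vu}$ for each edge $\{u,v\}$, and declaring the $E$-extensions to be $a_v=\{a_v\}\cup\{e_{uv}:u\sim v\}$ and $e_{uv}=\{a_u,a_v\}$. One checks that $\<F,E>$ is extensional (distinct $a_v$ differ because $a_v\mathrel E a_v$ but $a_w\notin\{a_v\}\cup\{e_{uv}\}$; distinct $e_{uv}$ give distinct doubletons since $\Gamma$ is simple; and an $a$ never has the same extension as an $e$), that every element is $E$-cyclic ($a_v\mathrel E a_v$, and $e_{uv}\mathrel E a_u\mathrel E e_{uv}$), and that $\Aut(\<F,E>)\iso\Aut(\Gamma)\iso G$: any structure automorphism must permute the elements $x$ with $x\mathrel E x$, which are exactly the $a_v$, and then preserve the $e$-objects and their pairs, so it arises from a graph automorphism, and conversely. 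Since $\<F,E>$ is a set-sized extensional relation, $\BAFA$ yields a transitive set $A_G$ with $\<A_G,\in>\iso\<F,E>$; thus $\Aut(\<A_G,\in>)\iso G$ and every element of $A_G$ lies in its own transitive closure.

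For part (2), given $\pi\in\Aut(\<A_G,\in>)$ I define $\ol\pi$ on $\WF(A_G)$ by the recursion of~\eqref{eq:1} with $A_G$ in place of $\At$, namely $\ol\pi(x)=\pi(x)$ for $x\in A_G$ and $\ol\pi(x)=\ol\pi[x]$ otherwise; this is a legitimate recursion on the set-like well-founded relation $\in$ with all edges into $A_G$ deleted, restricted to $\WF(A_G)$, because every infinite $\in$-descending sequence from an element of $\WF(A_G)$ meets $A_G$. As in theorem~\ref{Theorem.quineauto} one verifies by induction on rank that each $\ol\pi\restrict\WF_\alpha(A_G)$ is an $\in$-automorphism of $\WF_\alpha(A_G)$, so $\ol\pi$ is an automorphism of $\WF(A_G)$ extending $\pi$; it is the unique such extension since any automorphism of $\WF(A_G)$ restricting to $\pi$ must obey~\eqref{eq:1}; and $\pi\mapsto\ol\pi$ is a group homomorphism, since $\ol{\pi\rho}$ and $\ol\pi\,\ol\rho$ both extend $\pi\rho$. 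For the reverse direction, let $j$ be any automorphism of $\<\WF(A_G),\in>$. The class defined by the formula ``$x\in\TC(x)$'' is $j$-invariant because $j$ is a structure isomorphism, and it equals $A_G$: it contains $A_G$ by the $\in$-cyclicity arranged in part (1), and conversely if $x\in\WF(A_G)$ satisfies $x\in\TC(x)$ then there is a finite $\in$-cycle $x=y_0\ni y_1\ni\cdots\ni y_k=x$; iterating it produces an infinite $\in$-descending sequence from $x\in\WF(A_G)$, which must meet $A_G$ at some $y_j$, whereupon transitivity of $A_G$ pulls $y_{j+1},\dots,y_k=x$ into $A_G$. Hence $j[A_G]=A_G$, so $j\restrict A_G\in\Aut(\<A_G,\in>)$ and $j=\ol{j\restrict A_G}$ by uniqueness. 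Therefore $\pi\mapsto\ol\pi$ is an isomorphism $\Aut(\<A_G,\in>)\iso\Aut(\WF(A_G))$, and composing with the isomorphism of part (1) gives $\Aut(\WF(A_G))\iso G$, as required.

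The main obstacle is part (1): one must make sure the set-theoretic coding of the Frucht graph is genuinely extensional and introduces no spurious automorphisms, while simultaneously keeping every element $\in$-cyclic so that $A_G$ is definable inside $\WF(A_G)$. Once these bookkeeping points are settled, part (2) is essentially a rerun of theorem~\ref{Theorem.quineauto}(2). (One could instead invoke directly a form of the Frucht theorem producing an extensional binary relation with automorphism group $G$, but a verification of this kind is unavoidable.)
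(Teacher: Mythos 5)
Your proof is correct, and your part (2) --- extending each automorphism of $A_G$ to $\WF(A_G)$ by the recursion~\eqref{eq:1} and recovering $A_G$ inside $\WF(A_G)$ so that every automorphism of $\WF(A_G)$ restricts to one of $A_G$ --- is essentially the paper's argument. Where you genuinely diverge is part (1). The paper does not invoke Frucht's theorem; it encodes the multiplication table of $G$ directly, taking Quine atoms $a_g$ for $g\in G$ together with witness sets $r_{g,h}=\<r_{g,h},a_g,h,a_{gh}>$, where the group element $h$ itself (a well-founded set, hence fixed by every automorphism) acts as a rigid edge label. Any automorphism then induces a permutation $\pi$ of $G$ satisfying $\pi(gh)=\pi(g)h$, which forces $\pi$ to be left multiplication by $\pi(1)$, so $\Aut(A_G)\cong G$ via the left regular representation. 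Your route instead outsources the group-realization to the Frucht/de~Bruijn theorem and then codes the resulting graph as an extensional, everywhere $\in$-cyclic binary relation, realized as a transitive set by the (weak form of) \BAFA; your extensionality and rigidity checks for that coding are sound. The trade-off: the paper's construction is self-contained and needs no external graph-theoretic input, while yours is modular and has the merit of making explicit why $A_G$ is first-order definable inside $\WF(A_G)$ (as the class of sets $x$ with $x\in\TC(x)$, every $\in$-cycle in $\WF(A)$ being trapped in $A$) --- a point the paper's proof leaves somewhat implicit when it asserts that an automorphism of $\WF(A_G)$ ``must permute the Quine atoms and the sets of the form $r_{g,h}$.''
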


\begin{proof}
Work in $\GBCf+\BAFA$, and fix any group $G$, which we may assume is in $\WF$, since it has an isomorphic copy there. Let $A_G$ consist of the transitive closure of the following objects:
\begin{enumerate}
 \item Quine atoms $a_g$ for every $g \in G$, and
 \item Sets $r_{g,h}$ satisfying $r_{g,h} = \<r_{g,h}, a_g, h, a_{gh}>$ for  every $g,h \in G$.
\end{enumerate}
If $j\colon A_G\to A_G$ is an automorphism of $A_G$, then $j$ must fix every element of $G$, as these are in $\WF$, and it must permute the Quine atoms of $A_G$. It follows that $j(a_g) = a_{\pi(g)}$ for some permutation $\pi$ of $G$ and furthermore that $j(r_{g,h}) = \<j(r_{g,h}), a_{\pi(g)}, h, a_{\pi(gh)}>=r_{\pi(g),h}$. Thus, the embedding $j$ is determined uniquely by $\pi$, and we may see also that $\pi(gh) = \pi(g)h$, which implies that $\pi(h) = gh$ for every $h$, where $g = j(1)$. Conversely, for every $g \in G$, the permutation $\pi(h) = gh$ extends to an automorphism $j_g$ of $A_G$. Furthermore, $j_{gh} = j_g \circ j_h$, and so $g \mapsto j_g$ is an isomorphism of $G$ with $\Aut(A_G)$, meaning the automorphisms of the structure $\<A_G,{\in}>$. So the automorphism group of the transitive set $A_G$ is isomorphic to $G$. Every automorphism of $A_G$ extends canonically to an automorphism of $\WF(A_G)$ via~(\ref{eq:1}), and conversely, every automorphism of $\WF(A_G)$ arises from an automorphism of $A_G$, since it must permute the Quine atoms and the sets of the form $r_{gh}$. So in $\WF(A_G)$, the full automorphism group of the universe is definably isomorphic with $G$, in the sense that every automorphism of $A_G$ extends to an automorphism of $\WF(A_G)$ and every automorphism $\WF(A_G)$ arises as such an extension.
% This is Emil's original argument, for the original formulation:
%Let $M_0$ be a model of $\ZFC$, and $G$ a group inside $M_0$. Using Rieger's theorem, construct inside $M_0$ a full class model $M$ of $\ZFCf + V=\WF(u)$ whose $\WF$ is isomorphic to $M_0$, where the set $u$ consists of
%\begin{enumerate}
% \item Quine atoms $a_g$ for every $g \in G$, and
% \item Sets $r_{g,h}$ satisfying $r_{g,h} = \<r_{g,h}, a_g, h, a_{gh}>$ for  every $g,h \in G$.
%\end{enumerate}
%Every definable automorphism $j$ of $M$ is also definable in $M_0$. Since $G$ is in the well-founded kernel of $M$, its elements are fixed by $j$. It follows that $j$ is uniquely determined by a permutation $\pi$ of $G$ such that $j(a_g) = a_{\pi(g)}$, and $\pi(gh) = \pi(g)h$ (because $j(r_{g,h}) = \<j(r_{g,h}), a_{\pi(g)}, h, a_{\pi(gh)}>$). Thus, $\pi(h) = gh$ for every $h$, where $g := j(1)$. Conversely, for every $g \in G$, the permutation
%$\pi(h) = gh$ extends to a definable automorphism $j_g$. Clearly, $j_{gh} = j_g \circ j_h$, thus $g \mapsto j_g$ is an isomorphism of $G$ with $\Aut_D(M)$.
%
%Note that if $M_0$ is transitive, the same argument actually gives $\Aut(M) = Aut_D(M) \simeq G$.
\end{proof}

\section{Theories without nontrivial self-embeddings}

Let us show now in contrast that there are no nontrivial elementary
embeddings or automorphisms under Aczel's anti-foundation axiom and
similar anti-foundational theories where equality of sets is
determined by the isomorphism type of the underlying $\in$-relation on
the hereditary members of the set.

\begin{theorem}\label{Theorem.IENoEmbeddings}
Under $\GBCf+\IE$, there is no nontrivial $\Sigma_1$-elementary
embedding $j\colon V\to V$ of the universe to itself.
\end{theorem}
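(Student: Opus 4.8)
The plan is to reduce the statement entirely to the definition of \IE\ together with Theorem~\ref{Theorem.NojWFtoWF}. Suppose toward a contradiction that $j\colon V\to V$ is $\Sigma_1$-elementary and nontrivial, so $j(x)\neq x$ for some set $x$. First I would record the elementary structural facts: being $\Delta_0$-elementary, $j$ preserves and reflects $\in$, hence is injective (if $j(x)=j(y)$ then $w\in x\iff j(w)\in j(x)=j(y)\iff w\in y$ for every $w$, so $x=y$ by extensionality); and by Theorem~\ref{Theorem.NojWFtoWF} the range $j[V]$ is a transitive (full) class.

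The heart of the argument is to show that $j$ restricts to an $\in$-isomorphism $\<\TC(\{x\}),{\in}>\iso\<\TC(\{j(x)\}),{\in}>$ carrying $x$ to $j(x)$. The inclusion $j[\TC(\{x\})]\of\TC(\{j(x)\})$ is immediate from $\in$-preservation: a witness $w=w_0\in w_1\in\cdots\in w_k=x$ to $w\in\TC(\{x\})$ is carried by $j$ to $j(w)\in j(w_1)\in\cdots\in j(x)$, so $j(w)\in\TC(\{j(x)\})$. For the reverse inclusion, note that $j[\TC(\{x\})]$ is a set (by replacement) containing $j(x)$, and it is transitive: if $z\in j(w)$ with $w\in\TC(\{x\})$, then $z\in j[V]$ by transitivity of the range, so $z=j(v)$ for some $v$, whence $v\in w\in\TC(\{x\})$ gives $v\in\TC(\{x\})$ and thus $z\in j[\TC(\{x\})]$. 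By minimality of the transitive closure, $\TC(\{j(x)\})\of j[\TC(\{x\})]$. Hence $j\restrict\TC(\{x\})$ is a bijection between the two transitive closures preserving and reflecting $\in$ and sending $x$ to $j(x)$, i.e.\ the desired isomorphism.

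By the paper's definition of isomorphic sets, this says precisely that $x$ and $j(x)$ are isomorphic sets, so \IE\ forces $j(x)=x$, contradicting the choice of $x$. Therefore $j=\id_V$, and no nontrivial $\Sigma_1$-elementary self-embedding exists.

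The only step beyond bookkeeping is the identity $j[\TC(\{x\})]=\TC(\{j(x)\})$, and there the real content is carried entirely by the transitivity of $j[V]$ supplied by Theorem~\ref{Theorem.NojWFtoWF}; this is also the one place where one genuinely uses $\Sigma_1$-elementarity rather than mere $\Delta_0$-elementarity. Finally, since the uniqueness part of each of \AFA, \SAFA, \FAFA\ --- and of every $\AFA^\sim$ --- implies \IE, this single argument simultaneously rules out nontrivial $\Sigma_1$-elementary self-embeddings of the universe in all of those theories over $\GBCf$.
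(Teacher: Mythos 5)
Your proof is correct and takes essentially the same route as the paper: restrict $j$ to the transitive closure $t=\TC(\{x\})$, use the transitivity of $j[V]$ from theorem~\ref{Theorem.NojWFtoWF} to see that $j\restrict t$ is an $\in$-isomorphism onto a transitive set, and then apply \IE. The paper phrases the conclusion as ``$j\restrict t$ is the identity'' rather than identifying $j[t]$ with $\TC(\{j(x)\})$, but this is the same argument.
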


\begin{proof}
Suppose that $j\colon V\to V$ is a $\Sigma_1$-elementary embedding of the universe to itself. Take any~$x\in
V$, and let $t$ be the transitive closure of $\singleton{x}$. Since the range $j[V]$ is transitive
by theorem~\ref{Theorem.NojWFtoWF}, it follows that $j[t]$ is also transitive, and
$j\restrict t$ is an isomorphism of $\<t,\in>$ with $\<j(t),\in>$. By~\IE, therefore, $j\restrict t$
is the identity, and so~$j(x)=x$.
\end{proof}

What is going on is this: under the \IE\ principle, every transitive set $t$ is determined by the isomorphism type of the underlying directed graph $\<t,\in>$; but by the axiom of choice, this graph has an isomorphic copy in the well-founded universe $\WF$, which is consequently fixed by $j$. Thus, $\<t,\in>$ and $\<j(t),\in>$ are both isomorphic to the same graph and hence to each other, and so $j(t)=t$. Applying this to the transitive closure $t$ of $\set{x}$, it follows that $j(x)=x$ for every set $x$.

\begin{corollary}\label{AFA no embedding}
Under $\GBCf+\AFA$, $\GBCf+\SAFA$, $\GBCf+\FAFA$, and more generally,
$\GBCf+\AFA^\sim$ for any regular bisimulation concept~$\sim$, there is no
nontrivial $\Sigma_1$-elementary embedding of the universe.
\end{corollary}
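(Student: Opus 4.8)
The plan is to derive \Corollary{AFA no embedding} from \Theorem{Theorem.IENoEmbeddings} by the observation already foreshadowed in the body text: the uniqueness part of every $\AFA^\sim$ implies $\IE$. So the corollary is immediate once we know $\GBCf+\AFA^\sim \vdash \IE$ for each regular bisimulation concept $\sim$, and in particular for the three named instances $\AFA$, $\SAFA$, $\FAFA$.

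First I would recall that $\AFA^\sim$ was described above as splitting into an existence part and a uniqueness part, and that ``the uniqueness part of every $\AFA^\sim$ implies $\IE$, which is the uniqueness part of $\FAFA$.'' Thus the one-line argument is: assume $\GBCf+\AFA^\sim$; by the uniqueness component we get $\IE$; now apply \Theorem{Theorem.IENoEmbeddings} to conclude there is no nontrivial $\Sigma_1$-elementary $j\colon V\to V$. For the reader who wants the named cases spelled out, I would note them individually: under $\AFA$, two isomorphic sets have bisimilar (indeed isomorphic) canonical pictures, so each is bisimilar to the unique transitive set collapsing that picture, forcing them equal; under $\SAFA$ the analogous statement holds with canonical \emph{tree} pictures, since isomorphic sets have isomorphic tree pictures and hence the same rigid-tree witness; and $\FAFA$ is simply Finsler's axiom, whose uniqueness clause \emph{is} $\IE$ by definition. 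In each case isomorphic sets are equal, which is exactly $\IE$.

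I do not expect a genuine obstacle here; the only thing to be careful about is that the implication ``$\AFA^\sim \Rightarrow \IE$'' is a matter of unwinding Aczel's definitions of regular bisimulation and the associated extensionality principle, and I would simply cite \cite{Acz88} for that fact rather than reproving it, since the machinery of the $\AFA^\sim$ family was set up in the body precisely so that this holds uniformly. The proof is therefore short:

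\begin{proof}
As remarked above, the uniqueness part of any $\AFA^\sim$, for $\sim$ a regular bisimulation concept, implies the isomorphism-extensionality axiom \IE\ (see \cite{Acz88}); in particular this applies to $\AFA$, $\SAFA$, and $\FAFA$. Hence any model of $\GBCf+\AFA^\sim$ is a model of $\GBCf+\IE$, and so by theorem~\ref{Theorem.IENoEmbeddings} it has no nontrivial $\Sigma_1$-elementary embedding $j\colon V\to V$.
\end{proof}
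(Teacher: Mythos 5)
Your proof is correct and is exactly the argument the paper intends: the corollary is stated without a separate proof precisely because the earlier discussion already notes that the uniqueness part of every $\AFA^\sim$ implies \IE, so theorem~\ref{Theorem.IENoEmbeddings} applies directly. Your additional spelling-out of the three named cases is a harmless elaboration of the same route.
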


The proof of theorem~\ref{Theorem.IENoEmbeddings} makes a fundamental
use of the axiom of choice, both in order to know that $j\restrict\WF$
is trivial and to find a surjection from a well-founded set to a given set. The proof of theorem~\ref{Theorem.IENoEmbeddings}
applies as is to $\GBf+\IE$ if we know a priori that $j[V]$ is
transitive, in particular $\GBf+\IE$ proves that there are no
nontrivial \emph{automorphisms} of the universe. However, this does not
resolve the case of general elementary embeddings, so it is natural to
inquire whether one may prove theorem~\ref{Theorem.IENoEmbeddings}
without the axiom of choice.

\begin{question}
 Is it consistent with $\GBf+\AFA$, $\ZFf+\AFA$, or a similar
 extension of $\ZFf+\IE$ that there is a nontrivial elementary
 embedding of the universe to itself?
\end{question}

In light of the fact that it remains a prominent open question whether one can prove the Kunen inconsistency in \GB, that is, with the axiom of foundation but without the axiom of choice, we shouldn't expect an easy negative answer to this question. But perhaps one might hope for a positive answer by building a suitable model of \AFA\ without the axiom of choice, where some ill-founded sets have no well-founded copies of their hereditary $\in$-graphs and can be subject to nontrivial embeddings.

%\bibliographystyle{alpha}
%\bibliography{MathBiblio,HamkinsBiblio}

\end{document}